\newtheorem{theorem}{Theorem}[section]
\newtheorem{lemma}[theorem]{Lemma}
\newtheorem{proposition}[theorem]{Proposition}
\theoremstyle{definition}
\newtheorem{definition}[theorem]{Definition}
\theoremstyle{remark}
\numberwithin{equation}{section}
\begin{document}

\title{Explicit  coproduct formula\\ for quantum groups of infinite series}
\author{V.K. Kharchenko}
\address{Universidad Nacional Aut\'onoma de M\'exico, Facultad de Estudios Superiores 
Cuautitl\'an, Primero de Mayo s/n, Campo 1, CIT, Cuautitlan Izcalli, Edstado de M\'exico, 54768, MEXICO}
\email{vlad@unam.mx}
\thanks{The author was supported by PAPIIT IN 112913, UNAM, and PACIVE CONS-18, FES-C UNAM, M\'exico.}

\subjclass{Primary 16W30, 16W35; Secondary 17B37.}

\keywords{Quantum group, Coproduct, PBW basis.}

\begin{abstract}
We find an explicit form for the coproduct formula for PBW generators 
of quantum groups of infinite series $U_q(\frak{sp}_{2n})$ and  $U_q(\frak{so}_{2n}).$
Similar formulas for $U_q(\frak{sl}_{n+1})$ and  $U_q(\frak{so}_{2n+1})$ are already known.
\end{abstract}
\maketitle
\markboth{V.K. Kharchenko}{Coproduct formula}

\centerline{\bf To appear in the Israel Journal of Mathematics}

\section{introduction}
In the present paper, we prove an explicit  coproduct formula 
for quantum groups $U_q(\mathfrak{g}),$ where 
$\mathfrak{g}=\mathfrak{sp}_{2n}$ or   $\mathfrak{g}=\mathfrak{so}_{2n}$
are simple Lie algebras of type $C,$ $D$ respectively.
Consider a Weyl basis of the Lie algebra $\mathfrak{g},$ 
$$
u[k,m]=[\ldots [[x_k,x_{k+1}],x_{k+2}], \ldots , x_m],
$$ 
see  \cite[Chapter VI, \S 4]{Ser} or  \cite[Chapter IV, \S 3, XVII]{Jac}.
Here, $x_i=x_{2n-i}$ and in case $C_n,$ we have $k\leq m\leq 2n-k,$ whereas 
 in case $D_n,$ the sequence $x_1,x_2, \ldots , x_{2n-1}$ has no term $x_{n-1}$ and $k\leq m<2n-k.$ 
If we replace the Lie operation by  skew brackets,
then the above basis becomes a set of PBW generators for the
related quantum group $U_q(\mathfrak{g}).$ We then find the coproduct of those PBW generators: 
\begin{equation}
\Delta (u[k,m])=u[k,m]\otimes 1+g_{km}\otimes u[k,m]
\label{c}
\end{equation}
$$
+\sum _{i=k}^{m-1}\tau _i(1-q^{-1})g_{ki}\, u[i+1,m]\otimes u[k,i],
$$
where  $g_{ki}$ is a group-like element that corresponds to $u[k,i],$ and  almost all $\tau $ equal $1.$ 
More precisely, in case $C_n,$ there  is one exception: 
$\tau _{n-1}=1+q^{-1}$ if $m=n.$ In case $D_n,$ the exception is:
$\tau _{n-1}=0$ if $m=n;$ and $\tau _{n-1}=p_{n\, n-1}$ otherwise.

 Recall that the same formula is valid for $U_q(\mathfrak{sl}_{n+1})$
and $U_q(\mathfrak{so}_{2n+1}).$ In case $A_n$ there are no 
exceptions \cite[Lemma 3.5]{KA}. In case $B_n,$ the main parameter $q$ becomes 
$q^2,$ and we have an exception $\tau_n=q,$ whereas in the sequence $x_1, x_2, \ldots , x_{2n},$
the variable $x_n$ appears twice: $x_i=x_{2n-i+1},$ see  \cite[Theorem 4.3]{Kh11}.
In case $B_2,$ an explicit formula  was established by  M. Beattie, S. D\v{a}sc\v{a}lescu, \c{S}. Raianu, \cite{BDR}.

 In the formula, if $i\geq 2n-m,$ then  $u[i+1,m]$ does not appear in the list
of the above PBW generators because $m>2n-(i+1).$ The elements 
$u[k,m]$ with $m>$ $2n-k$ are defined in a similar manner,
$$u[k,m]=[x_k,[x_{k+1}, \ldots , [x_{m-1},x_m]\ldots ]].$$ The formula  
remains valid for those elements as well, in which case
all of the $\tau $ equal $1,$ except for $\tau _n=1+q^{-1}$ if $k=n$
in case $C_n,$ and  $\tau _n=0$ if $k=n$ in case $D_n$ (by definition, in case $D_n,$
the sequence that  defines $u[n,m]$ has the form $x_n, x_{n+2}, x_{n+3}, \ldots , x_m).$
In other words, whereas the PBW generators
do not span a subcoalgebra, the formula remains valid for 
a basis of the subcoalgebra generated by them. Furthermore, the formula 
demonstrates that the PBW generators span a left coideal.

We are reminded  that M. Rosso \cite{Ros0} and H. Yamane \cite{Yam}
separately  constructed PBW generators for $U_q(\mathfrak{sl}_{n+1}).$
 Then, G. Lusztig \cite{Lus} found PBW bases for arbitrary $U_q(\mathfrak{g})$ in terms 
of his famous automorphisms defining the action of braid groups. A coproduct formula  for PBW generators $E_{\beta }$
 in Lusztig form appeared in the paper by S.Z. Levendorski  and Ya. S. Soibelman \cite[Theorem 2.4.2]{LS90}:
\begin{equation}
\Delta (E_{\beta }^n)-(E_{\beta }\otimes 1+q^{H\beta }\otimes E_{\beta })^n
\in U_h(\mathfrak{n}_+)_{\beta }\otimes U_h({\mathfrak B}_+).
\end{equation}
Recently I. Heckenberger and H.-J. Schneider \cite[Theorem 6.14]{HS} proved
a similar formula within a more general context: 
\begin{equation}
\Delta _{\mathfrak{B}(N)}(x)-x\otimes 1
\in {\bf k}\langle N_{{\beta }_{l-1}}\rangle {\bf k}\langle N_{{\beta }_{l-2}}\rangle 
\cdots {\bf k}\langle N_{{\beta }_1}\rangle\otimes \mathfrak{B}(N), \ \ x\in N_{\beta _l}.
\end{equation}
Although these formulas have no explicit form,
they are convenient for inductive considerations, particularly in the study of one-sided 
coideal subalgebras. 

We develop the coproduct formula by the same method as that in \cite{Kh11}
for the case $B_n.$ Firstly, we demonstrate that the values of the elements $u[k,m]$ in $U_q(\mathfrak{g})$
are almost independent of the arrangement of brackets (Lemmas \ref{ins}, \ref{ins1},  \ref{Dins}, \ref{Dins1}).
Then, using this fact, we demonstrate that these values form a set of PBW generators (Propositions \ref{strB}, \ref{DstrB}).
Next, we find the explicit shuffle representation of those elements  (Propositions \ref{shu}, \ref{Dshu}). 
In case $C_n$ (as well as in cases $A_n$ and $B_n)$ these PBW generators are proportional to shuffle comonomials. 
This proportionality makes it easy to find the coproduct of those elements inside the  shuffle coalgebra.
Because there is a clear connection  (\ref{copro}) between the
coproduct in $U_q(\mathfrak{g})$ and the coproduct in the shuffle coalgebra, 
we can set up the coproduct formula (Theorem \ref{cos}). In case $D_n,$ each PBW generator 
is either proportional to a comonomial or a  linear  combination of two comonomials. 
These two options  allows one to find the coproduct inside the shuffle coalgebra
and deduce the coproduct formula (Theorem \ref{Dcos}).

The set of PBW generators for  $U_q(\mathfrak{g})$ is the union of those sets for
positive and negative quantum Borel subalgebras. Thus, we 
focus only on the positive quantum Borel subalgebra $U_q^+(\mathfrak{g}).$
\section{Preliminaries}

\subsection{Skew brackets}
Let $X=$ $\{ x_1, x_2,\ldots, x_n\} $ be a set of quantum variables; that is, 
associated with each $x_i$ there are an element $g_i$ of a fixed Abelian group 
$G$ and a character $\chi ^i:G\rightarrow {\bf k}^*.$ 
For every word $w$ in $X,$ let $g_w$ or gr$(w)$ denote
an element of $G$ that appears from $w$ by replacing each $x_i$ with $g_i.$
In the same manner,  $\chi ^w$ denotes a character that appears from $w$
by replacing each $x_i$ with $\chi ^i.$

Let $G\langle X\rangle $ denote the skew group algebra generated by $G$
and {\bf k}$\langle X\rangle $ with the commutation rules $x_ig=\chi ^i(g)gx_i,$
or equivalently $wg=\chi ^w(g)gw,$ where $w$ is an arbitrary word in $X.$
If $u,$ $v$ are homogeneous in  each $x_i,$ $1\leq i\leq n$ polynomials,
then the skew brackets are defined by the formula 
\begin{equation}
[u,v]=uv-\chi ^u(g_v) vu.
\label{sqo}
\end{equation}
We use the notation  $\chi ^u(g_v)=p_{uv}=p(u,v).$
The form $p(\hbox{-},\hbox{-})$ is bimultiplicative: 
\begin{equation}
p(u, vt)=p(u,v)p(u,t), \ \ p(ut,v)=p(u,v)p(t,v).
\label{sqot}
\end{equation}
In particular $p(\hbox{-},\hbox{-})$ is completely defined by $n^2$ parameters $p_{ij}=\chi ^{i}(g_{j}).$ 

The brackets satisfy an analog of the Jacobi identity:
\begin{equation}
[[u, v],w]=[u,[v,w]]+p_{wv}^{-1}[[u,w],v]+(p_{vw}-p_{wv}^{-1})[u,w]\cdot v.
\label{jak1}
\end{equation}
The antisymmetry identity transforms as follows:
\begin{equation}
[u,v]=-p_{uv}[v,u]+(1-p_{uv}p_{vu})u\cdot v
\label{cha}
\end{equation}
The Jacobi identity (\ref{jak1}) implies a conditional  identity:
\begin{equation}
[[u, v],w]=[u,[v,w]],\hbox{ provided that } [u,w]=0.
\label{jak3}
\end{equation}
By the evident induction on length,  this result allows for the following generalization:
\begin{lemma} \cite[Lemma 2.2]{KL}.
If $y_1,$ $y_2,$ $\ldots ,$ $y_m$ are homogeneous linear combinations of words such that
$[y_i,y_j]=0,$ $1\leq i<j-1<m,$ then  the bracketed polynomial $[y_1y_2\ldots y_m]$ is independent 
of the precise arrangement of brackets:
\begin{equation}
[y_1y_2\ldots y_m]=[[y_1y_2\ldots y_s],[y_{s+1}y_{s+2}\ldots y_m]], \ 1\leq s<m.
\label{ind}
\end{equation}
\label{indle}
\end{lemma}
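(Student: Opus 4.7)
The plan is induction on $m$, with $m\le 2$ trivial. For the inductive step, an arbitrary bracketing of $y_1y_2\cdots y_m$ has the form $[A_s,B_s]$, where $A_s$ is some bracketing of $y_1,\ldots,y_s$ and $B_s$ some bracketing of $y_{s+1},\ldots,y_m$ for a unique $1\le s\le m-1$. The two subsequences $y_1,\ldots,y_s$ and $y_{s+1},\ldots,y_m$ inherit the pairwise skew-commutation hypothesis, so by the inductive hypothesis I may write $A_s=[y_1y_2\cdots y_s]$ and $B_s=[y_{s+1}y_{s+2}\cdots y_m]$ unambiguously. It then suffices to show $[A_s,B_s]=[A_{s+1},B_{s+1}]$ for each $1\le s\le m-2$, since any two splits are connected by a chain of such one-step shifts.

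For a single shift, I would apply the inductive hypothesis once more to $B_s$ (whose length $m-s\ge 2$ is strictly less than $m$) to present it as $B_s=[y_{s+1},C]$ with $C=[y_{s+2}y_{s+3}\cdots y_m]$, and then invoke the conditional Jacobi identity (\ref{jak3}) with $u=A_s$, $v=y_{s+1}$, $w=C$. Provided the required hypothesis $[A_s,C]=0$ is verified, it gives
$$
[A_s,B_s]=[A_s,[y_{s+1},C]]=[[A_s,y_{s+1}],C]=[A_{s+1},C]=[A_{s+1},B_{s+1}],
$$
where $[A_s,y_{s+1}]$ is recognized as $A_{s+1}$ via the inductive hypothesis on length $s+1<m$, and the last equality uses $B_{s+1}=C$.

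The one substantive point is to verify $[A_s,C]=0$. Here $A_s$ is a homogeneous linear combination of words in $\{y_1,\ldots,y_s\}$ of a fixed multi-degree and $C$ a homogeneous linear combination of words in $\{y_{s+2},\ldots,y_m\}$ of a fixed multi-degree. Every cross pair $(i,j)$ with $i\le s$ and $j\ge s+2$ satisfies $i<j-1$, so by hypothesis $[y_i,y_j]=0$, equivalently $y_iy_j=p_{ij}\,y_jy_i$. The bimultiplicativity (\ref{sqot}) then propagates this pairwise skew-commutation word by word: for any word $u$ in $\{y_1,\ldots,y_s\}$ and $v$ in $\{y_{s+2},\ldots,y_m\}$ one has $uv=p(u,v)\,vu$, with the scalar depending only on the multi-degrees. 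Summing over the monomials of $A_s$ and of $C$ yields $A_sC=p(A_s,C)\cdot CA_s$, whence $[A_s,C]=0$ by (\ref{sqo}), completing the induction. The main obstacle is essentially bookkeeping: keeping the homogeneity hypothesis intact so that bimultiplicativity can be used to lift pairwise skew-commutation to polynomials, and checking that the inductive hypothesis is legitimately applicable to the shorter products $A_s$, $B_s$, and $[A_s,y_{s+1}]$.
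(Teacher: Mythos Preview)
Your proof is correct and follows exactly the approach the paper indicates: induction on the length $m$ using the conditional Jacobi identity (\ref{jak3}). The paper does not spell out the details (it simply says ``By the evident induction on length'' and cites \cite[Lemma 2.2]{KL}), but your argument---reducing to adjacent splits $[A_s,B_s]=[A_{s+1},B_{s+1}]$ and verifying $[A_s,C]=0$ via bimultiplicativity---is precisely the evident induction the paper has in mind.
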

Another conditional identity is:
if $[u,v]=0$ (that is, $uv=p_{uv}vu$), then 
\begin{equation}
[u,[v,w]]=-p_{vw}[[u,w],v]+p_{uv}(1-p_{vw}p_{wv})v\cdot [u,w]. 
\label{jja}
\end{equation}
The brackets are related to the product by ad-identities:
\begin{equation}
[u\cdot v,w]=p_{vw}[u,w]\cdot v+u\cdot [v,w], 
\label{br1f}
\end{equation}
\begin{equation}
[u,v\cdot w]=[u,v]\cdot w+p_{uv}v\cdot [u,w].
\label{br1}
\end{equation}
It is easy to verify all of the identities developing the brackets by (\ref{sqo}).

\smallskip
\subsection{Quantum Borel algebra}
The group  $G$ acts on the free algebra ${\bf k}\langle X\rangle $
by $ g^{-1}ug=\chi ^u(g)u,$ where $u$ is an arbitrary monomial in $X.$
The skew group algebra $G\langle X\rangle $ has a natural Hopf algebra structure 
$$
\Delta (x_i)=x_i\otimes 1+g_i\otimes x_i, 
\ \ \ i\in I, \ \ \Delta (g)=g\otimes g, \ g\in G.
$$

Let $C=||a_{ij}||$ be a symmetrizable Cartan matrix and let 
$D={\rm diag }(d_1, \ldots , d_n)$ be such that $d_ia_{ij}=d_ja_{ji}.$
We denote a Kac-Moody algebra defined by $C,$ see \cite{Kac}, as $\mathfrak g.$
Suppose that parameters $p_{ij}$ are related by
\begin{equation}
p_{ii}=q^{d_i}, \ \ p_{ij}p_{ji}=q^{d_ia_{ij}},\ \ \ 1\leq i,j\leq n. 
\label{KM1}
\end{equation}
In this case the multiparameter quantization $U^+_q ({\mathfrak g})$ 
is a homomorphic image of $G\langle X\rangle $ defined by Serre relations 
with the skew brackets in place of the Lie operation:
\begin{equation}
[\ldots [[x_i,\underbrace{x_j],x_j], \ldots ,x_j]}_{1-a_{ji} \hbox{ times}}=0, \ \ 1\leq i\neq j\leq n.
\label{KM2}
\end{equation}
By \cite[Theorem 6.1]{Khar}, the left-hand sides of these relations are skew-primitive elements 
in $G\langle X\rangle .$ Therefore the ideal generated by  these elements is a Hopf ideal,
hence $U^+_q ({\mathfrak g})$  has the natural structure of a Hopf algebra.

\subsection{PBW basis.} Recall that a linearly ordered set $V$ is said to be
a {\it set of PBW generators} (of infinite heights) if the set of all products
\begin{equation}
g\cdot v_1^{n_1}\cdot v_2^{n_2}\cdot \ \cdots \ \cdot v_k^{n_k}, \ \ \ g\in G, \ \ v_1<v_2<\ldots <v_k\in V
\label{pbge}
\end{equation}
is a basis of $U_q^+(\mathfrak{g}).$ 

We fix the order $x_1>x_2>\ldots >x_n$  on the set $X.$
On the set of all words in $X,$ we fix the lexicographical order
with the priority from left to right,
where a proper beginning of a word is considered to 
be greater than the word itself.

A non-empty word $u$ is called a {\it standard Lyndon-Shirshov} word if $vw>wv$
for each decomposition $u=vw$ with non-empty $v,w.$
The {\it standard arrangement} of brackets $[u]$ on a standard
word $u$ is defined by induction: $[u]=[[v][w]],$
where $v, w$ are the standard words such that $u=vw$
and  $v$ has the minimal length \cite{pSh1}, \cite{pSh2}, see also \cite{Lot}.

In \cite{Kh4}, it was proven that the values of bracketed standard words 
corresponding to positive roots with the lexicographical order 
form a set of PBW generators (of infinite heights) for 
$U_q^+(\mathfrak{g}),$ where $\mathfrak{g}$ is a Lie algebra of infinite series $A, B, C, D.$

\smallskip
\subsection{Shuffle representation}

The {\bf k}-algebra $A$ generated by values of $x_i,$ $1\leq i\leq n$ in $U_q^+(\mathfrak{g})$
is not a Hopf subalgebra because it has no nontrivial group-like elements. 
Nevertheless, 
$A$ is a Hopf algebra in the category of Yetter-Drinfeld modules over {\bf k}$[G].$
In particular, $A$ has a structure of a braided Hopf algebra
with a braiding $\tau (u\otimes v)=p(v,u)^{-1}v\otimes u.$
The braided coproduct  $\Delta ^b:A\rightarrow A\underline{\otimes }A$ 
is connected with the coproduct on $U_q^+(\mathfrak{g})$ as follows
\begin{equation}
\Delta ^b(u)=\sum _{(u)}u^{(1)}\hbox{gr}(u^{(2)})^{-1}\underline{\otimes} u^{(2)}, \hbox{ where }\  
\Delta (u)=\sum _{(u)}u^{(1)}\otimes u^{(2)}.
\label{copro}
\end{equation}

The tensor space $T(W),$ $W=\sum x_i{\bf k}$ also has the structure of a braided Hopf algebra,
which is the {\it braided shuffle algebra} $Sh_{\tau }(W)$ with the coproduct 
\begin{equation}
\Delta ^b(u)=\sum _{i=0}^m(z_1\ldots z_i)\underline{\otimes} (z_{i+1}\ldots z_m),
\label{bcopro}
\end{equation}
where $z_i\in X,$ and $u=(z_1z_2\ldots z_{m-1}z_m)$ is the tensor
$z_1\otimes z_2\otimes \ldots \otimes z_{m-1}\otimes z_m,$ called a {\it comonomial},
considered as an element of $Sh_{\tau }(W).$ The braided shuffle product satisfies
\begin{equation}
(w)(x_i)=\sum _{uv=w}p(x_i,v)^{-1}(ux_iv), \ \ (x_i)(w)=\sum _{uv=w}p(u,x_i)^{-1}(ux_iv).
\label{spro}
\end{equation}
The map $x_i\rightarrow (x_i)$ defines a homomorphism of the braided Hopf algebra
$A$ into the braided Hopf algebra $Sh_{\tau }(W).$ This  is 
extremely useful for calculating the coproducts due to formulae (\ref{copro}) and (\ref{bcopro}).
If $q$ is not a root of $1,$ then this representation is faithful. Otherwise, 
its  kernel is  the largest Hopf ideal in $A^{(2)},$
where $A^{(2)}$ is the ideal of $A $
generated by values of $x_ix_j,$ $1\leq i,j\leq n.$ See details in P. Schauenberg  \cite{Sch}, M. Rosso \cite{Ros},
  M. Takeuchi \cite{Tak1},  D. Flores de Chela and J.A. Green  \cite{FC}, N. Andruskiewitsch, H.-J. Schneider  \cite{AS}, V. K. Kharchenko \cite{Kh03}.

\section{Relations in $U_q^+({\mathfrak sp}_{2n}).$}
Throughout the following three sections, we fix a parameter $q$ such that $q^3\not=1,$ $q\not= -1.$
If $C$ is a Cartan matrix of type $C_n,$ then relations (\ref{KM1}) take the form
\begin{equation}
p_{ii}=q, \ 1\leq i<n;\ \ p_{i\, i-1}p_{i-1\, i}=q^{-1}, \ 1<i<n;\  p_{ij}p_{ji}=1,\  j>i+1;
\label{b1rel}
\end{equation}
\begin{equation}
 \ p_{nn}=q^2, \ \ p_{n-1\, n}p_{n\, n-1}=q^{-2}. 
\label{b1rell}
\end{equation}
In this case,
the quantum Borel algebra $U^+_q (\mathfrak{sp}_{2n})$
is a homomorphic image of $G\langle X\rangle $ subject to the following relations 
\begin{equation}
[x_i,[x_i,x_{i+1}]]=[[x_i,x_{i+1}],x_{i+1}]=0, \ 1\leq i<n-1; \ \ [x_i,x_j]=0, \ \ j>i+1;
\label{relb}
\end{equation}
\begin{equation}
[[x_{n-1},x_n],x_n]=[x_{n-1},[x_{n-1},[x_{n-1},x_n]]]=0.
\label{relbl}
\end{equation} 
\begin{lemma} 
If $u$ is a standard word independent of $x_n,$ then either $u$ $=x_kx_{k+1}\ldots x_m,$ $k\leq m< n,$
or $[u]=0$ in $U_{q}^+(\mathfrak{sp}_{2n}).$ Here $[u]$ is a nonassociative word
with the standard arrangement of brackets.  
\label{nul}
\end{lemma}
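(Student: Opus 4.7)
The plan is induction on the length of $u$. For $|u|\le 2$ the claim is immediate: length 1 is trivial, and a length-2 standard word must be $x_ix_j$ with $i<j$, which either has the form $x_ix_{i+1}$ (and we are done) or satisfies $[x_i,x_j]=0$ by (\ref{relb}).

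For $|u|\ge 3$ I would take the standard factorization $u=vw$ with $v$ of minimal length, so that $v$ and $w$ are standard words strictly shorter than $u$ and $[u]=[[v],[w]]$. Applying the induction hypothesis to each factor, either one of the brackets already vanishes in $U_q^+(\mathfrak{sp}_{2n})$ (in which case $[u]=0$ by bilinearity of the skew bracket and there is nothing to prove), or else $v=x_ax_{a+1}\cdots x_b$ and $w=x_cx_{c+1}\cdots x_d$. Standardness of $u$ forces $a\le c$ since the first letter of $u$ must be the largest appearing, and one checks that exactly four sub-cases occur: (i) $c=b+1$, so $u=x_a\cdots x_d$ is of the required form; (ii) $c\ge b+2$, a gap case; (iii) $a<c\le b$, an overlap case; (iv) $a=c$ with $b<d$, so that $v$ is a proper prefix of $w$.

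Case (ii) is easy: every letter of $v$ has index at distance at least $2$ from every letter of $w$, so $[x_i,x_j]=0$ by (\ref{relb}), and iterated application of the conditional identity (\ref{jak3}) propagates this commutation to $[[v],[w]]=0$. The substantive content lies in cases (iii) and (iv). Here the plan is to use Lemma \ref{indle} to regroup $[v]=[[x_a\cdots x_{c-1}],[x_c\cdots x_b]]$ in (iii), then expand the outer bracket by the Jacobi identity (\ref{jak1}) and exploit the Serre relations $[x_i,[x_i,x_{i+1}]]=0$ and $[[x_i,x_{i+1}],x_{i+1}]=0$ at the overlap index to cancel all terms; case (iv) is analogous, with the Serre relation at the shared initial letter doing the work (the smallest instance being $u=x_ax_ax_{a+1}$, where $[u]=[x_a,[x_a,x_{a+1}]]=0$ directly). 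The main technical obstacle is keeping track of the scalars $p_{ij}$ produced by the Jacobi expansions and verifying that they conspire to kill each residual summand.

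A cleaner alternative bypassing the overlap analysis entirely is to observe that the relations in (\ref{relb}) restricted to $x_1,\dots,x_{n-1}$ are exactly the Serre relations of type $A_{n-1}$. Hence the subalgebra of $U_q^+(\mathfrak{sp}_{2n})$ generated by these variables is a homomorphic image of $U_q^+(\mathfrak{sl}_n)$, and since any zero bracket in $U_q^+(\mathfrak{sl}_n)$ maps to a zero bracket in $U_q^+(\mathfrak{sp}_{2n})$, the lemma follows from the corresponding classification for type $A$, which is \cite[Lemma 3.5]{KA} (obtainable also from the PBW theorem of \cite{Kh4}).
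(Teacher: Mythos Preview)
Your ``cleaner alternative'' in the final paragraph is exactly the paper's proof: the paper observes that the subalgebra generated by $x_1,\dots,x_{n-1}$ is $U_q^+(\mathfrak{sl}_n)$ and then invokes the type-$A$ classification, citing the third statement of \cite[Theorem $A_n$]{Kh4} (not \cite[Lemma 3.5]{KA}, which is the coproduct formula rather than the standard-word classification; your parenthetical reference to \cite{Kh4} is the right one). One small sharpening: the paper asserts the subalgebra \emph{is} $U_q^+(\mathfrak{sl}_n)$, not merely a quotient, though your weaker claim already suffices for the vanishing statement.

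The direct inductive argument you outline first is not in the paper and is unnecessary here. It is also not complete as written: cases (iii) and (iv) are only sketched, and the claim that Serre relations plus Jacobi expansions will ``conspire to kill each residual summand'' is precisely the content of the type-$A$ theorem you end up citing anyway. If you wanted to make the direct route self-contained you would effectively be reproving \cite[Theorem $A_n$]{Kh4}, so there is no gain. Drop the first three paragraphs and keep only the reduction to type $A$.
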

\begin{proof}
The Hopf subalgebra of $U_q^+(\mathfrak{sp}_{2n})$ 
generated  by $x_i,$ $1\leq i<n$ is the Hopf algebra $U_{q}^+({\mathfrak sl}_{n})$ defined by the Cartan matrix of type $A_{n-1}.$
By this reason the third statement of \cite[Theorem $A_n$]{Kh4} applies.
\end{proof}

\begin{definition} \rm 
In what follows, $x_i,$ $n<i<2n$ denotes the generator $x_{2n-i}.$
Respectively, $v(k,m),$ $1\leq k\leq m<2n$ is the word
$x_kx_{k+1}\cdots x_{m-1}x_m,$ whereas $v(m,k)$ is the opposite word
$x_mx_{m-1}\cdots x_{k+1}x_k.$  If $1\leq i<2n,$ then $\phi (i)$
denotes the number $2n-i,$ so that $x_i=x_{\phi (i)}.$ 
\label{fis}
\end{definition}

\begin{definition} \rm 
If $k\leq i<m<2n,$ then we set
\begin{equation}
\sigma _k^m\stackrel{df}{=}p(v(k,m),v(k,m)),
\label{mu11}
\end{equation}
\begin{equation}
\mu _k^{m,i}\stackrel{df}{=}p(v(k,i),v(i+1,m))\cdot p(v(i+1,m),v(k,i)).
\label{mu1}
\end{equation}
\label{slo}
\end{definition}
\begin{lemma}
For each $i,$ $k\leq i<m$ we have 
\begin{equation}
\mu _k^{m,i}=\sigma _k^m(\sigma _k^i \sigma _{i+1}^m)^{-1}.
\label{mu23}
\end{equation}
\label{mu}
\end{lemma}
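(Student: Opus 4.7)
The plan is to prove this identity by a direct computation using only the bimultiplicativity of the form $p(-,-)$ recorded in (\ref{sqot}), together with the evident factorization of words
\[
v(k,m) = v(k,i)\, v(i+1,m),
\]
which holds whenever $k\le i<m$ by Definition \ref{fis}.

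First I would expand $\sigma_k^m = p(v(k,m),v(k,m))$ by substituting this factorization into both arguments. Bimultiplicativity (\ref{sqot}) then splits the expression into four factors:
\[
\sigma_k^m = p(v(k,i),v(k,i))\cdot p(v(k,i),v(i+1,m))\cdot p(v(i+1,m),v(k,i))\cdot p(v(i+1,m),v(i+1,m)).
\]
The outer two factors are, by Definition \ref{slo}, exactly $\sigma_k^i$ and $\sigma_{i+1}^m$, while the two middle factors multiply to $\mu_k^{m,i}$ by (\ref{mu1}). Hence $\sigma_k^m = \sigma_k^i\,\mu_k^{m,i}\,\sigma_{i+1}^m$, and solving for $\mu_k^{m,i}$ yields (\ref{mu23}).

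There is no real obstacle here: the statement is essentially a tautological restatement of the fact that $p$ is bimultiplicative once one notices the concatenation identity for the words $v(k,\cdot)$. The only thing to be careful about is that $\sigma_k^i$ and $\sigma_{i+1}^m$ are indeed invertible (so that division makes sense), which is automatic because they are values of a character at group elements and hence lie in ${\bf k}^*$.
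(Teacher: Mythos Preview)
Your proof is correct and takes essentially the same approach as the paper: the paper records the general identity $p(ab,ab)=p(a,a)p(b,b)\cdot p(a,b)p(b,a)$ as a consequence of bimultiplicativity and then specializes to $a=v(k,i)$, $b=v(i+1,m)$, which is exactly what you do.
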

\begin{proof}
Because $p(\hbox{-},\hbox{-})$ is a bimultiplicative map, there is a decomposition
\begin{equation}
p(ab,ab)=p(a,a)p(b,b)\cdot p(a,b)p(b,a).
\label{mu55}
\end{equation}
Applying this equality to $a=v(k,i),$ $b=v(i+1,m),$ we get the required relation.
\end{proof}

\begin{lemma}
If $1\leq k\leq m<2n,$ then 
\begin{equation}
\sigma_k^m =\left\{ \begin{matrix}
q^2,\hfill &\hbox{if } m=\phi (k);\hfill \cr
q,\hfill &\hbox{otherwise}.\hfill 
                             \end{matrix}
                    \right.
\label{mu21}
\end{equation}
\label{sig}
\end{lemma}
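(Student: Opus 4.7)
The plan is a direct calculation using the bimultiplicativity identities (\ref{sqot}). Write $n_j$ for the number of positions in the word $v(k,m)$ at which the actual generator is $x_j$, using the identification $x_i = x_{2n-i}$ for $n < i < 2n$. Applying (\ref{sqot}) to each argument of $p(v(k,m), v(k,m))$ gives
\[
\sigma_k^m = \prod_{j_1, j_2 = 1}^{n} p_{j_1 j_2}^{n_{j_1} n_{j_2}}.
\]
By (\ref{b1rel}), $p_{j_1 j_2} p_{j_2 j_1} = 1$ whenever $|j_1 - j_2| \ge 2$, so those off-diagonal pairs cancel, leaving
\[
\sigma_k^m = \prod_{j=1}^{n} p_{jj}^{n_j^2} \cdot \prod_{j=1}^{n-1} (p_{j,j+1} p_{j+1,j})^{n_j n_{j+1}}.
\]
Into this I would substitute $p_{jj} = q$ for $j < n$, $p_{nn} = q^2$, $p_{j,j+1} p_{j+1,j} = q^{-1}$ for $j < n-1$, and the exceptional $p_{n-1,n} p_{n,n-1} = q^{-2}$ from (\ref{b1rell}).

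The remaining work is to read off the multiplicities $n_j$ and verify the cancellation by cases on the position of $[k,m]$ relative to the fold index $n$. When $m < n$ or $k > n$, no two positions of $v(k,m)$ fold onto the same generator, so every $n_j$ lies in $\{0,1\}$, no $x_n$ occurs, the diagonal contributes $q^{m-k+1}$, the nearest-neighbor product contributes $q^{-(m-k)}$, and the total is $q$. When $k \le n \le m$, the folding identifies position $i > n$ with generator $x_{2n-i}$, so $n_n = 1$, the doubly-covered generators ($n_j = 2$) form the interval $[\max(k, 2n-m), n-1]$, and the remaining $x_j$ with $j < n$ in $[\min(k,2n-m), n-1]$ are singly covered.

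In the extremal sub-case $m = \phi(k) = 2n-k$ the singly-covered set is empty (the "$2$" plateau fills $[k, n-1]$), and a direct computation gives $\sigma_k^m = q^{4(n-k)+2} \cdot q^{-4(n-k)} = q^2$: the $+2$ comes from $p_{nn} = q^2$, and the $-4(n-k)$ is the sum of $-4$ from each of the $n-k-1$ interior nearest-neighbor pairs plus the boundary pair $(n-1,n)$ contributing $(q^{-2})^{2 \cdot 1} = q^{-4}$. In every other sub-case the singly-covered set is a nonempty plateau adjacent to the doubly-covered one, and the same telescoping works: the plateau-boundary pair contributes $(q^{-1})^{1 \cdot 2} = q^{-2}$, or $(q^{-2})^{1 \cdot 1} = q^{-2}$ if it coincides with $j = n-1$, and this exactly absorbs the diagonal surplus produced by the extra $n_j = 1$ positions, yielding $\sigma_k^m = q$. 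The main obstacle is not any single calculation but the careful bookkeeping at the two junctions (the $1$-to-$2$ plateau boundary and the endpoint $j = n-1$ governed by (\ref{b1rell})); once these are matched in each sub-case, the exponents collapse to the stated values.
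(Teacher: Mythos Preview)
Your approach is correct and genuinely different from the paper's. You compute $\sigma_k^m$ in closed form by counting multiplicities $n_j$ of each actual generator $x_j$ in $v(k,m)$ and then evaluating the resulting product of $p_{jj}^{n_j^2}$ and $(p_{j,j+1}p_{j+1,j})^{n_jn_{j+1}}$; the paper instead proceeds by induction on $m-k$, using the recurrence $\sigma_k^{m+1}=\sigma_k^m\cdot q\cdot p(v(k,m),x_{m+1})p(x_{m+1},v(k,m))$ and separately computing the last factor (their equation (\ref{si45})). Your global method is conceptually cleaner---one writes down the exponent once and simplifies---while the paper's incremental argument avoids having to track several sub-cases simultaneously and packages the boundary behaviour into a single three-valued quantity (\ref{si45}) that is then reused elsewhere. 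Your final paragraph is a sketch rather than a full verification: the degenerate sub-cases $k=n$ or $m=n$ (where the doubly-covered plateau is empty) and the sub-case where the $1$-to-$2$ boundary sits immediately below $n-1$ are not written out, though they do go through with the same arithmetic. If you want the argument to be complete, spell out the exponent of $q$ in each of the four configurations (doubly-covered plateau empty/nonempty crossed with whether the special pair $(n-1,n)$ sees $n_{n-1}=1$ or $n_{n-1}=2$); each collapses to $1$ or $2$ as claimed.
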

\begin{proof}
The bimultiplicativity of $p(\hbox{-},\hbox{-})$ implies that  $\sigma _k^m=\prod _{k\leq s,t\leq m}p_{st}$
is the product of all coefficients of the $(m-k+1)\times (m-k+1)$-matrix $||p_{st}||.$ By (\ref{b1rel})
all coefficients on the main diagonal equal $q$ except $p_{nn}=q^2.$

If $m<n$ or $k>n,$ then for non-diagonal coefficients, we have 
$p_{st}p_{ts}=1$ unless $|s-t|=1,$ whereas  $p_{s\, s+1}p_{s+1\, s}=q^{-1}.$ 
Hence, $\sigma _k^m=q^{m-k+1}\cdot q^{-(m-k)}=q.$

If $m=n$ or $k=n$ but not both, then we have $p_{nn}=q^2,$ $p_{n\, n-1}p_{n-1\, n}=q^{-2}.$ By the above reasoning,
 we get $\sigma _k^m=q^{(m-k)+2}\cdot q^{-(m-k-1)-2}=q.$ Of course, if $k=n=m$ then  $\sigma _k^m=p_{nn}=q^2.$

In the remaining case,  $k<n<m,$ we use induction on $m-k.$

By (\ref{mu55}) we have
\begin{equation}
\sigma _k^{m+1}=\sigma _k^m\cdot q\cdot p(v(k,m),x_{m+1})\cdot p(x_{m+1}, v(k,m)).
\label{si69}
\end{equation}
We shall prove that if $k<n<m,$ then
\begin{equation}
p(v(k,m),x_{m+1})\cdot p(x_{m+1}, v(k,m))=\left\{ \begin{matrix}
1,\hfill &\hbox{if } k=\phi (m)-1;\hfill \cr
q^{-2},\hfill &\hbox{if } k=\phi (m);\hfill \cr
q^{-1},\hfill &\hbox{otherwise.}\hfill 
                             \end{matrix}
                    \right.
\label{si45}
\end{equation}
The left hand side of the above equality is $\prod_{k\leq t\leq m} p_{t\, m+1}p_{m+1\, t}.$
If $m=n,$ then by \ref{b1rel} and \ref{b1rell}, the factor $p_{t\, n+1}p_{n+1\, t}$ differs from 1 only
if $t\in \{ n-2, n-1, n \} $ and related values are respectively $q^{-1}, q^2, q^{-2}.$
 Hence, if $k<n-1=\phi (m)-1,$ then the total product is $q^{-1};$ if $k=n-1=\psi (m)-1,$ then this is $1;$
if $k=n=\phi (m),$ then this is $q^{-2}.$

If $m>n,$ then the factor $p_{t\, m+1}p_{m+1\, t}$ differs from 1 only
if $$t\in \{ \phi (m)-2, \phi (m)-1, \phi (m), m \}$$ and related values are respectively $q^{-1}, q^2, q^{-1}, q^{-1}.$
Therefore if $k<\phi (m)-1,$ then the whole product is $q^{-1};$
if $k=\phi (m)-1,$ then this is $1;$ if $k=\phi (m),$ then this is $q^{-2};$ if $k>\phi (m),$ then this is again $q^{-1}.$
This completes the proof of (\ref{si45}).

To complete the inductive step, we use (\ref{si45}) and inductive hypothesis:
if $k$ $=\phi (m)-1,$ then $\sigma _k^{m+1}$ $=q\cdot q\cdot 1$ $=q^2;$
 if $k$ $=\phi (m),$ then $\sigma _k^{m+1}$ $=q^2\cdot q\cdot q^{-2}$ $=q;$
otherwise $\sigma _k^{m+1}$ $=q\cdot q\cdot q^{-1}$ $=q.$
\end{proof}

\smallskip
We define the bracketing of $v(k,m),$ $k\leq m<2n$ as follows.
\begin{equation}
v[k,m]=\left\{ \begin{matrix} [[[\ldots [x_k,x_{k+1}], \ldots ],x_{m-1}], x_m],\hfill 
&\hbox{if } m<\phi (k);\hfill \cr
 [x_k,[x_{k+1},[\ldots ,[x_{m-1},x_m]\ldots ]]],\hfill &\hbox{if } m>\phi (k);\hfill \cr 
[\! [v[k,m-1],x_m]\! ],\hfill &\hbox{if } m=\phi (k),\hfill 
\end{matrix}\right.
\label{ww}
\end{equation}
where in the latter term,  $[\! [u,v]\! ]\stackrel{df}{=}uv-q^{-1}p(u,v)vu.$

Conditional identity (\ref{ind}) demonstrates that the value of  $v[k,m]$ in $U_q^+(\mathfrak{sp}_{2n})$
is independent of the precise arrangement of brackets, provided that $m\leq n$ or $k\geq n.$
Now we are going to analyze what happens  with the arrangement  of brackets if $k< n<m\neq \phi (k).$

\begin{lemma} 
If  $k\leq n\leq m<\phi (k),$ then the value in $U_q^+(\mathfrak{sp}_{2n})$ 
of the bracketed word $[y_kx_{n}x_{n+1}\cdots x_m],$ where $y_k=v[k,n-1],$
is independent of the precise arrangement of brackets. 
\label{ins}
\end{lemma}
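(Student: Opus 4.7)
The plan is to apply the bracketing-independence Lemma \ref{indle} to the sequence of homogeneous elements $y_k, x_n, x_{n+1}, \ldots, x_m$, treating $y_k = v[k,n-1]$ as a single entry. The conclusion will follow as soon as every pair of non-adjacent entries is shown to skew-commute.

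Translating via Definition \ref{fis}, the tail entries $x_n, x_{n+1}, \ldots, x_m$ are, in the original alphabet, $x_n, x_{n-1}, \ldots, x_{2n-m}$, and the hypothesis $m < \phi(k) = 2n-k$ places all of these true indices in $\{2n-m, \ldots, n\} \subseteq \{k+1, \ldots, n\}$. Two tail entries at sequence-distance at least $2$ have true-index distance equal to that sequence-distance, so the commutation part of the Serre relations (\ref{relb}) yields $[x_i, x_j] = 0$ for every non-adjacent tail pair. The remaining non-adjacent pairs are $(y_k, x_j)$ with $j \geq n+1$, which after folding reduces to
\[
[v[k,n-1], x_s] = 0, \qquad k+1 \leq s \leq n-1.
\]
This identity lives entirely inside the type-$A_{n-1}$ Hopf subalgebra generated by $x_1, \ldots, x_{n-1}$ identified in the proof of Lemma \ref{nul}.

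To prove this interior identity I would induct on $n-k$. For $s = n-1$, regroup $v[k,n-1] = [v[k,n-3],[x_{n-2},x_{n-1}]]$, which is a legitimate application of (\ref{ind}) because the chain $x_k,\ldots,x_{n-1}$ satisfies non-adjacent commutation; then invoke (\ref{jak3}) via $[v[k,n-3], x_{n-1}] = 0$ to slide the outer $x_{n-1}$ inside, and collapse the inner bracket by the Serre relation $[[x_{n-2},x_{n-1}], x_{n-1}] = 0$. For interior $s$, write $v[k,n-1] = [v[k,n-2], x_{n-1}]$ and apply (\ref{jak3}) using the inductive hypothesis $[v[k,n-2], x_s] = 0$: if $s \leq n-3$ this produces $[v[k,n-2],[x_{n-1},x_s]] = 0$ because $[x_{n-1},x_s] = 0$ by (\ref{relb}); the borderline case $s = n-2$, where $[x_{n-1},x_{n-2}] \neq 0$, requires one further expansion via the conditional identity (\ref{jja}) or the antisymmetry formula (\ref{cha}) to reduce it to the previously settled cases.

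The main obstacle is precisely this interior identity. The tail-tail commutations and the final invocation of Lemma \ref{indle} are immediate, but controlling $[v[k,n-1], x_s]$ for $s$ strictly between $k$ and $n$ is the only real content, since neither of the defining Serre relations applies directly to it and the naive Jacobi expansion (\ref{jak1}) introduces error terms of the form $(p_{vw}-p_{wv}^{-1})[u,w]\cdot v$ that do not vanish individually. These have to be absorbed either by the inductive argument sketched above or by direct appeal to the corresponding type-$A$ identity already established in \cite{Kh4} (or its specialization in \cite{KA}).
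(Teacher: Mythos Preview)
Your overall plan is exactly the paper's: feed the sequence $y_k,x_n,x_{n+1},\ldots ,x_m$ to Lemma~\ref{indle}, observe that tail--tail commutations are immediate from (\ref{relb}), and reduce everything to the single identity $[v[k,n-1],x_s]=0$ for $k+1\le s\le n-1$ inside the type-$A_{n-1}$ subalgebra.

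Where you diverge is in how you attack that identity. You peel off the top letter, writing $v[k,n-1]=[v[k,n-2],x_{n-1}]$, and try to induct downward; this works smoothly for $s\le n-3$ but, as you yourself note, leaves the case $s=n-2$ unresolved because $[x_{n-1},x_{n-2}]\neq 0$ and the error terms do not cancel by inspection. The paper avoids this entirely by splitting $v[k,n-1]$ \emph{at the position $s$} rather than at the top:
\[
v[k,n-1]=\bigl[v[k,s-2],\,v[s-1,n-1]\bigr]
\]
(a legitimate regrouping by Lemma~\ref{indle} inside $U_q^+(\mathfrak{sl}_n)$). Since every letter of $v[k,s-2]$ has index at most $s-2$, one has $[v[k,s-2],x_s]=0$ directly from (\ref{relb}), so the conditional Jacobi identity (\ref{jak3}) gives
\[
[v[k,n-1],x_s]=\bigl[v[k,s-2],\,[v[s-1,n-1],x_s]\bigr].
\]
The inner bracket $[v[s-1,n-1],x_s]$ is then killed in one stroke by Lemma~\ref{nul}: the word $v(s-1,n-1)x_s$ is standard, independent of $x_n$, not of the form $x_a x_{a+1}\cdots x_b$, and its standard bracketing is precisely $[v[s-1,n-1],x_s]$. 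No induction, no borderline case.

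Your fallback remark---``direct appeal to the corresponding type-$A$ identity already established in \cite{Kh4}''---is in fact the right move, and is exactly what Lemma~\ref{nul} packages. So your proposal is not wrong, but the inductive route you sketch is unnecessarily painful; the clean argument is to split at $s$ and invoke Lemma~\ref{nul} once.
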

\begin{proof} 
To apply (\ref{ind}), it suffices to check 
$[y_k,x_t]=0,$  where $n<t\leq m$ or, equivalently, $\phi (m)\leq t<n.$ 
By (\ref{jak3}) we have
$$
[y_k,x_t]=\hbox{\Large[}[v[k,t-2],v[t-1,n-1]],x_t\hbox{\Large]}
=\hbox{\Large[}v[k,t-2], [v[t-1,n-1],x_t]\hbox{\Large]}.
$$
By Lemma \ref{nul} the element $[v[t-1,n-1],x_t]$ equals zero in $U_{q}^+(\mathfrak{so}_{2n})$
because the word $u(t-1,n)x_t$ is independent of $x_n,$ it is standard, and the standard bracketing is precisely $[v[t-1,n],x_t].$ 
\end{proof} 
\begin{lemma} 
If $k\leq n,$ $ \phi (k)<m,$ then the value in $U_q^+(\mathfrak{sp}_{2n})$ 
of the bracketed word $[x_kx_{k+1}\cdots x_ny_m],$ where $y_m=v[n+1,m],$
is independent of the precise arrangement of brackets. 
\label{ins1}
\end{lemma}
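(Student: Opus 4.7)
The plan is to apply Lemma \ref{indle} to the sequence $x_k,x_{k+1},\ldots,x_n,y_m$.  Among the generators $x_a$ with $k\le a\le n$, the Serre relations (\ref{relb}) immediately yield $[x_a,x_b]=0$ whenever $|a-b|\ge 2$, so the substantive content reduces to establishing
$$
[x_a,y_m]=0\quad\text{in }U_q^+(\mathfrak{sp}_{2n})\qquad(k\le a\le n-1).
$$

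To prove this identity I would first invoke Lemma \ref{indle} a second time, now inside $y_m=v[n+1,m]$.  Its letters in paper notation have pairwise non-adjacent indices (in original notation they descend through $x_{n-1},x_{n-2},\ldots,x_{2n-m}$), so by (\ref{relb}) they skew-commute pairwise and the internal bracket arrangement of $y_m$ is free.  Rewriting $y_m$ in the left-nested form $y_m=[\ldots[[x_{n+1},x_{n+2}],x_{n+3}],\ldots,x_m]$, the conditional Jacobi identity (\ref{jak3}) can be applied to peel $x_m$, then $x_{m-1}$, and so on off the outside:  whenever the current outermost letter has original-index differing from $a$ by at least two, the identity $[u,[v,w]]=[[u,v],w]$ (valid because the peel-off letter skew-commutes with $x_a$) pushes $x_a$ one step deeper.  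The peeling succeeds until the outermost letter enters the Dynkin neighbourhood $\{x_{a-1},x_a,x_{a+1}\}$ of $x_a$, i.e.\ down to paper index $2n-a+1$.

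What remains is to verify $[x_a,\,v[n+1,2n-a+1]]=0$.  Here I would use Lemma \ref{indle} inside $v[n+1,2n-a+1]$ to split off its innermost pair $[x_a,x_{a-1}]$, writing
$$
v[n+1,2n-a+1]=[v[n+1,2n-a-1],\,[x_a,x_{a-1}]],
$$
and then apply (\ref{jak3}) using the derived Serre-type identity $[x_a,[x_a,x_{a-1}]]=0$---a consequence of the Serre relation $[[x_{a-1},x_a],x_a]=0$ from (\ref{relb}) together with the antisymmetry rule (\ref{cha})---to transfer $x_a$ onto the outer factor $v[n+1,2n-a-1]$.  Iterating the argument, the computation reduces to the standard bracket of a Lyndon word in $x_1,\ldots,x_{n-1}$ whose letters are not strictly ascending (since $x_a$ now appears twice), and this vanishes by Lemma \ref{nul}.

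The hard part is bookkeeping the iterated Jacobi reductions so they telescope neatly onto a single standard bracket accessible to Lemma \ref{nul}, and then handling the two boundary cases separately:  $a=n-1$, where $x_a$ abuts the special generator $x_n$ governed by the quartic relation (\ref{relbl}); and $a=k=\phi(m)+1$, where the outermost peeling has no room to begin.  I expect these cases to require a complementary use of the alternative Jacobi identity (\ref{jja}) together with (\ref{relbl}), and to absorb the bulk of the technical effort.
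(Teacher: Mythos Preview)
Your reduction via Lemma~\ref{indle} to the vanishing $[x_a,y_m]=0$ for $k\le a\le n-1$ is exactly the paper's first step.  The divergence is in how that vanishing is established, and here the paper takes a much shorter route than you do.

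The paper's argument is essentially one observation.  Since $y_m=v[n+1,m]$ involves, in original indexing, only $x_{n-1},x_{n-2},\ldots,x_{2n-m}$ (and in particular no $x_n$), one computes
\[
p(x_a,y_m)\,p(y_m,x_a)=p_{a,a+1}p_{a+1,a}\cdot p_{aa}^2\cdot p_{a,a-1}p_{a-1,a}=q^{-1}\cdot q^{2}\cdot q^{-1}=1,
\]
so by the antisymmetry identity~(\ref{cha}) the bracket $[x_a,y_m]$ is a scalar multiple of $[y_m,x_a]$.  But $[y_m,x_a]$, after a relabelling of generators, is precisely the expression $[v[k',n-1],x_t]$ already shown to vanish in the proof of Lemma~\ref{ins} via Lemma~\ref{nul}.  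No iterated Jacobi peeling, no splitting off $[x_a,x_{a-1}]$, no separate boundary analysis.

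Your direct route through repeated applications of~(\ref{jak3}) is plausible in outline but incomplete as written.  A minor slip first: the letters of $y_m$ do \emph{not} pairwise skew-commute (e.g.\ $x_{n-1}$ and $x_{n-2}$ do not); what makes Lemma~\ref{indle} applicable is only that \emph{non-consecutive} letters in the sequence do.  More seriously, the phrase ``iterating the argument'' after reaching $[[x_a,v[n+1,2n-a-1]],[x_a,x_{a-1}]]$ hides the real work: the underlying word $x_a x_{n-1}\cdots x_{a+1}x_a x_{a-1}$ is not itself a standard Lyndon word, so Lemma~\ref{nul} does not apply directly, and you have not indicated how further Jacobi moves land on one.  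Finally, your expectation that the case $a=n-1$ needs the relation~(\ref{relbl}) is off target: $y_m$ contains no $x_n$, so $[x_{n-1},y_m]$ lives entirely in the $A_{n-1}$ subalgebra and~(\ref{relbl}) plays no role.
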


\begin{proof}
To apply (\ref{ind}), we need the equalities $[x_t,y_m]=0,$ $k\leq t<n.$ 
The polynomial $[x_t,y_m]$ is independent of $x_n.$ 
Moreover,  $[x_t,y_m]$ is proportional to $[y_m, x_t]$ due to antisymmetry identity (\ref{cha}) because 
$$p(x_t, y_m)p(y_m,x_t)=p_{t\, t+1}p_{tt}p_{t\, t-1}\cdot p_{t+1\, t}p_{tt}p_{t-1\, t}=1.$$
The equality $[y_m, x_t]=0$ turns to the proved above equality $[v[k,n-1],x_t]=0$ if one renames  the variables
$x_{n+1}\leftarrow x_k,$ $x_{n+2}\leftarrow x_{k+1}, \ldots .$
\end{proof}

\section{PBW generators of $U_q^+(\mathfrak{sp}_{2n})$}
\smallskip

\begin{proposition} If $q^3\neq 1,$ $q\neq -1,$ then
values of the elements  $v[k,m],$ $k\leq m\leq \phi (k)$ form a set of PBW generators with infinite heights
for the algebra $U_q^+(\mathfrak{sp}_{2n})$ over {\bf k}$[G].$
\label{strB}
\end{proposition}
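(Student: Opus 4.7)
The plan is to invoke \cite[Theorem $C_n$]{Kh4}, which asserts that the values under standard bracketing of the standard Lyndon-Shirshov words indexed by positive roots of $C_n$ form a set of PBW generators, with infinite heights, of $U_q^+(\mathfrak{sp}_{2n})$. It then suffices to show that the elements $v[k,m]$ with $1 \leq k \leq m \leq \phi(k)$ replace this set in an upper-triangular manner: each $v[k,m]$ of weight $\beta$ equals a nonzero scalar multiple of the standard bracketed Lyndon-Shirshov element for $\beta$, modulo products of $v[\cdot,\cdot]$ of strictly smaller weight summing to $\beta$. A routine count yields $n^2$ such pairs $(k,m)$, matching the number of positive roots of $C_n$, and the weights of the $v[k,m]$ are pairwise distinct.

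For $m < \phi(k)$, the word $v(k,m) = x_k x_{k+1} \cdots x_m$ is itself Lyndon-Shirshov: in any nontrivial factorization $v(k,m) = uw$, the first letter of $u$ is $x_k$, strictly greater in the fixed order than the first letter of $w$, giving $uw > wu$. Its standard bracketing is the right-normed expression $[x_k,[x_{k+1},[\ldots,[x_{m-1},x_m]\ldots]]]$, while (\ref{ww}) specifies the left-normed bracketing. The two coincide in $U_q^+(\mathfrak{sp}_{2n})$ by Lemma \ref{indle} when $m \leq n-1$ or $k = n$, using that (\ref{relb}) forces $[x_i,x_j] = 0$ whenever $|i - j| > 1$, and by Lemma \ref{ins} when $k < n \leq m < \phi(k)$. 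Hence in this range, $v[k,m]$ equals the standard bracketed Lyndon-Shirshov element exactly.

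The principal difficulty is the boundary case $m = \phi(k)$. The word $v(k,\phi(k))$ is not itself Lyndon-Shirshov: peeling off the final letter as $v(k,\phi(k)) = v' \cdot x_k$ violates $v' x_k > x_k v'$, since both start with $x_k$ but $v' x_k$ continues with $x_{k+1}$ and $x_k v'$ with $x_k$. Accordingly, the standard Lyndon-Shirshov word for the long root $2\alpha_k + \cdots + 2\alpha_{n-1} + \alpha_n$ has a different letter arrangement, and the modified bracket $[\! [u,v]\! ] = uv - q^{-1} p(u,v) vu$ in (\ref{ww}) is tailored to compensate for this mismatch. I would expand $v[k,\phi(k)] = [\! [v[k,\phi(k)-1], x_{\phi(k)}]\! ]$ by repeated use of the Jacobi identity (\ref{jak1}), its conditional form (\ref{jja}), the antisymmetry identity (\ref{cha}), and the ad-identities (\ref{br1f}) and (\ref{br1}), tracking scalars via $\sigma_k^m$ and $\mu_k^{m,i}$ from Definition \ref{slo} and Lemma \ref{sig}. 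The result should be a nonzero multiple of the standard bracketed Lyndon-Shirshov element, plus products of shorter $v[\cdot,\cdot]$. The hypothesis $q^3 \neq 1$, $q \neq -1$ guarantees that the leading scalar does not degenerate. This detailed bookkeeping is the principal computational hurdle.
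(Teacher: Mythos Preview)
Your strategy matches the paper's: start from \cite[Theorem $C_n$]{Kh4} and show that the $v[k,m]$ replace the standard generators triangularly. Two points need sharpening.

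For $m<\phi(k)$ with $k<n<m$, your claim that the standard bracketing of $v(k,m)$ is right-normed is false: the suffix $v(k+1,m)$ need not be Lyndon (e.g.\ when $m=\phi(k)-1$ it begins and ends with the same letter $x_{k+1}$, so peeling off the first letter does not give the standard factorization). The paper instead imports the explicit recurrence \cite[Lemma 7.18]{Kh4}, namely $[v(k,m)]=[x_k,[v(k+1,m)]]$ if $m<\phi(k)-1$ and $[v(k,m)]=[[v(k,m-1)],x_m]$ if $m=\phi(k)-1$, and combines it with Lemma~\ref{ins}, the vanishing $[x_k,v[n,m]]=0$, and the conditional identity (\ref{jak3}) to obtain $[v(k,m)]=v[k,m]$. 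Your appeal to Lemmas~\ref{indle} and~\ref{ins} is the right engine, but it needs this structural input about the actual shape of the standard bracketing.

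For $m=\phi(k)$ the gap is more substantial. The standard Lyndon--Shirshov generator for the long root is \emph{not} a bracketing of $v(k,\phi(k))$ at all; it is $[v_k]\stackrel{df}{=}[v[k,n-1],v[k,n]]$, the standard bracketing of the different word $v(k,n-1)v(k,n)$ (this identification is part of \cite[Theorem $C_n$]{Kh4} and is essential input you are missing). The paper rewrites $v[k,\phi(k)]$ via (\ref{cha}) as $-q^{-1}\pi\,[x_m,v[k,m-1]]$ plus a PBW-lower product, then analyzes $[x_k,v[k,m-1]]$ through Lemma~\ref{ins}, identity (\ref{jja}), and an auxiliary downward induction on $k$ establishing a decomposition $v[n+1,m-1]=\alpha\,v[k+1,n-1]+\sum_s \gamma_s\,v[s,n-1]\cdot U_s$ with $\alpha\neq 0$. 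The organizing principle is not tracking an explicit leading scalar through $\sigma_k^m$ and $\mu_k^{m,i}$ as you propose, but rather showing that every term in the resulting PBW expansion, except one proportional to $[v_k]$, \emph{begins with a PBW generator strictly smaller than} $[v_k]$ in the fixed order; the replacement is then licensed by \cite[Lemma 2.5]{KhT}. Your toolkit is correct, but this ordering argument and the concrete identification of $[v_k]$ are the missing ingredients.
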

\begin{proof} 
A word $v(k,m)$  is a standard Lyndon-Shirshov word provided that $k$ $\leq m$ $<\phi (k)$. By 
 \cite[Theorem $C_n,$ p. 218]{Kh4} these words with the standard bracketing, say  $[v(k,m)],$  become 
a set of PBW generators if we add to them the elements $[v_k]\stackrel{df}{=}[v[k,n-1], v[k,n]]$ $1\leq k<n.$
We shall use induction on $m-k$  in order to demonstrate that the value in $U_q^+(\mathfrak{sp}_{2n})$
of $[v(k,m)],$ $k\leq m<\phi (k)$ 
is the same as the value of  $v[k,m]$ with the bracketing given in (\ref{ww}).

If $m\leq n,$ then the value of $v[k,m]$ is independent of the arrangement of brackets,
see Lemma \ref{indle}. 

If $k<n<m,$ then according to \cite[Lemma 7.18]{Kh4}, the brackets in $[v(k,m)]$ 
are set by the following recurrence formulae (we note that $[v(k, m)]=[v_{k\, \phi (m)}]$ in the notations of \cite{Kh4}):
\begin{equation}
\begin{matrix}
[v(k,m)]=[x_k[v(k+1, m)]], \hfill & \hbox{if  } m<\phi (k)-1; \hfill \cr
[v(k, m)]=[[v(k, m-1)]x_m], \hfill & \hbox{if  } m=\phi (k)-1.\hfill 
\end{matrix}
\label{wsk}
\end{equation}
In the latter case, the induction applies directly. In the former case, using induction and Lemma \ref{ins}, we have 
$$[v(k+1, m)]=v[k+1, m]=[v[k+1,n-1], v[n,m]].$$ At the same time $[x_k,x_t]=0,$  $n\leq t\leq m$ because $x_t=x_{\phi (t)}$
and $\phi (t)\geq \phi (m)>k+1.$
This implies $[x_k, v[n,m]]=0.$ Applying conditional identity (\ref{jak3}), we get 
$$
[v(k, m)]=[x_k[v[k+1,n-1], v[n,m]]]={\big [}[x_kv[k+1,n-1]], v[n,m]{\big ]}=v[k,m].
$$

It remains to analyze the case $m=\phi (k).$ We have to demonstrate that if
in the set $V$ of PBW generators of Lyndon-Shirshov standard words 
one replaces the elements $[v_k]$
with $v[k,\phi (k)],$ $1\leq k<n$ then the obtained set is still a set of PBW generators.
To do this, due to  \cite[Lemma 2.5]{KhT} with $T\leftarrow \{ v[k,\phi (k)], 1\leq k<n\} ,$
$S\leftarrow U_q(\mathfrak{sp}_{2n}),$ it suffices to see that the leading term
of the PBW decomposition of $v[k,\phi (k)]$ in the generators $V$ is proportional to $[v_k].$

By definition (\ref{ww}) with $m=\phi (k),$ we have 
$$
v[k,m]=v[k,m-1]x_m-q^{-1}\pi  x_mv[k,m-1]
$$
$$
=-q^{-1}\pi [x_m,v[k,m-1]]+(1-q^{-1}\pi \pi^{\prime})v[k,m-1]\cdot x_m,
$$
where $\pi =p(v(k,m-1),x_m),$ $\pi ^{\prime }=p(x_m,v(k,m-1)).$
The second term of the latter sum is a basis element (\ref{pbge}) in the PBW generators $V.$
This element starts with $v[k,m-1]$
which is lesser than $[v_k].$ Hence it remains to analyze the bracket $[x_k,v[k,m-1]].$

If $k=n-1,$ then $[x_k,v[k,m-1]]$ $=[x_{n-1}, [x_{n-1},x_{n}]]$ $=[v_k].$ 

If $k<n-1,$ then by Lemma \ref{ins} we have 
\begin{equation}
[x_k,v[k,m-1]]=[ x_k, [v[k,n], v[n+1, m-1] ] ].
\label{esk}
\end{equation}
The basic relations (\ref{relb}) imply
$[x_k,[x_k,x_{k+1}]]=0,$ $[x_k, v[k+2,n]]=0.$ By Lemma \ref{indle} value of $v[k,n]$ is
independent of the arrangement of brackets,  $$v[k,n]=[[x_k,x_{k+1}],v[k+2,n]],$$
hence $[ x_k, v[k,n] ]=0.$

By Eq. (\ref{jja}) with $u\leftarrow x_k,$ $v\leftarrow v[k,n]$, $w\leftarrow v[n+1, m-1],$
the right hand side of (\ref{esk}) is a linear combination of the following two elements:
\begin{equation}
[ x_k, v[n+1, m-1]],  v[k,n] ], \ \ \ \ v[k,n]\cdot [ x_k, v[n+1, m-1]].
\label{sk}
\end{equation}
The latter element starts  with  a factor $v[k,n]$ which is lesser than $[v_k].$ 
Hence it remains to prove that the leading term 
of the former element is proportional $[v_k].$

By downward induction on $k$ we shall prove the following decomposition
\begin{equation}
v[n+1, m-1]=\alpha \, v[k+1,n-1]+\sum_{s=k+2}^{n-1}\gamma _s \, v[s,n-1]\cdot U_s,\ \ \alpha \neq 0.
\label{desk}
\end{equation}
If $k=n-2,$ then this decomposition reduces to $x_{n+1}=x_{n-1}.$
Let us apply $[\hbox{-}, x_m]$ to the both sides of the above equality. 
Using (\ref{cha}), we see that  $[v[k+1,n-1],x_m]$ is proportional to $v[k,n-1]+\gamma _{k+1}\, v[k+1,n-1]\cdot x_{m},$
whereas (\ref{br1f}) implies $[v[s,n-1]\cdot U_s, x_m]$ $=v[s,n-1]\cdot [U_s, x_{k-1}]$ for $s\geq k+2.$
This completes the inductive step. 

Let us apply $[x_k, \hbox{-}]$ to both sides of the already proved Eq. (\ref{desk}).
By (\ref{br1}), we get 
\begin{equation}
[x_k,v[n+1, m-1]]=\alpha \, v[k,n-1]+\sum_{s=k+2}^{n-1}\gamma _s^{\prime } \, v[s,n-1]\cdot [x_k,U_s].
\label{de1}
\end{equation}

Finally, let us apply $[\hbox{-},v[k,n]]$ to both sides of (\ref{de1}). In this way we find a decomposition of the first element of $(\ref{sk})$:
\begin{equation}
[[x_k,v[n+1, m-1]], v[k,n]]=\alpha \, [v_k]+\sum_{s=k+2}^{n-1}\gamma _s^{\prime }
 \, v[s,n-1]\cdot [x_k,U_s]\cdot v[k,n]
\label{de11}
\end{equation}
$$
-\sum_{s=k+2}^{n-1}\gamma _s^{\prime \prime} \, v[k,n]\cdot v[s,n-1]\cdot [x_k,U_s].
$$
All summands, except the first one,
 start with  $v[k,n],$ $v[s,n-1]$ that are lesser than $[v_k].$
Hence, the leading term, indeed, is proportional to $[v_k].$
\end{proof}

\begin{proposition}
Let $k\leq m<2n.$ In the shuffle representation, we have
\begin{equation}
v[k,m]=\alpha _k^m\cdot (v(m,k)), \ \
\alpha _k^m\stackrel{df}{=}\varepsilon_k^m (q-1)^{m-k}\cdot \prod _{k\leq i<j\leq m}p_{ij},
\label{shur}
\end{equation}
where 
\begin{equation}
\varepsilon _k^m =\left\{ \begin{matrix}
1+q,\hfill &\hbox{if } k\leq n\leq m, m\neq \phi (k);\hfill \cr
1+q^{-1},\hfill &\hbox{if }  m=\phi (k)\neq n;\hfill \cr
1,\hfill &\hbox{otherwise.}\hfill 
                             \end{matrix}
                    \right.
\label{eps}
\end{equation}
\label{shu}
\end{proposition}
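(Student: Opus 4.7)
The plan is to proceed by induction on $m - k$. The base case $m = k$ is immediate, since $v[k,k] = x_k$ maps to $(x_k)$ in the shuffle representation and $\alpha_k^k = 1$ in all sub-cases of (\ref{eps}). For the inductive step I apply the three-case recursion (\ref{ww}): $v[k,m] = [v[k,m-1], x_m]$ when $m < \phi(k)$, $v[k,m] = [x_k, v[k+1,m]]$ when $m > \phi(k)$, and $v[k,m] = [\![v[k,m-1], x_m]\!]$ with $[\![u,v]\!] = uv - q^{-1}p(u,v)vu$ when $m = \phi(k)$. In each case the inductive hypothesis rewrites the inner generator as a scalar multiple of a single comonomial, so the task reduces to evaluating one (possibly twisted) skew-bracket of comonomials in the shuffle algebra.

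Using (\ref{spro}) and bimultiplicativity of $p$, the shuffle bracket in the first case expands as
\begin{equation*}
[(v(m-1,k)),(x_m)] = \sum_{uv = v(m-1,k)} \bigl[p(x_m,v)^{-1} - p(v,x_m)\bigr]\,(u\,x_m\,v),
\end{equation*}
with analogous expansions in the other two cases. The coefficient of $(u\,x_m\,v)$ depends only on the suffix $v$ and vanishes precisely when $p(x_m,v)\,p(v,x_m) = 1$. Because $p_{st}p_{ts} = 1$ unless $s$ and $t$ are adjacent in the $C_n$ Dynkin diagram (after reducing indices via $\phi$), only a few suffixes contribute non-trivially. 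I aim to show that, after the identifications $x_i = x_{\phi(i)}$ for $i > n$ collapse distinct insertion positions into the same tensor, the surviving coefficients cancel pairwise on every comonomial except the boundary one with $u = \emptyset$, whose tensor equals $(v(m,k))$.

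The coefficient of that boundary term simplifies to $p(v(m-1,k),x_m)\,(c^{-1} - 1)$ with $c = \mu_k^{m,m-1} = \sigma_k^m/(\sigma_k^{m-1}\sigma_m^m)$ by Lemma \ref{mu}. Invoking Lemma \ref{sig}, I find that $c = q^{-1}$ away from the distinguished indices (giving the factor $q-1$ and $\varepsilon = 1$), $c = q^{-2}$ exactly when the step first crosses $n$ (giving $(q-1)(q+1)$ and $\varepsilon = 1+q$), and $c = 1$ in the boundary case $m = \phi(k)$; the extra $q^{-1}$ in the twisted bracket then produces $q^{-1}(q-1)$, which via $(1+q^{-1})/(1+q) = q^{-1}$ converts $\varepsilon_k^{m-1} = 1+q$ into $\varepsilon_k^m = 1+q^{-1}$. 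Combining this with the accumulated factor $\prod p_{ij}$ from the inductive hypothesis and the new adjacency column $\prod_{s} p_{sm}$ reproduces $\alpha_k^m$ exactly.

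The principal obstacle will be the cross-term cancellation when $k < n < m$: in this range $v(m-1,k)$ contains both $x_n$ and its aliases $x_{n+1}, \ldots, x_{m-1}$, so several splittings collide as tensors under $x_i = x_{\phi(i)}$, and their coefficients must be verified to cancel in pairs. To manage this combinatorial bookkeeping I intend to rearrange the bracket with the help of Lemmas \ref{ins} and \ref{ins1}, for instance as $v[k,m] = [v[k,n-1], v[n,m]]$, which separates the low-index half from its symmetric mirror and localizes the cancellation to a tractable sub-computation.
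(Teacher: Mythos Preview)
Your overall plan is the paper's: induction on $m-k$, the three-branch recursion (\ref{ww}), expansion via (\ref{spro}), and identification of the surviving insertion positions. Your repackaging of the boundary coefficient through $\mu_k^{m,m-1}=\sigma_k^m(\sigma_k^{m-1}\sigma_m^m)^{-1}$ and Lemmas \ref{mu}, \ref{sig} is a tidy equivalent of the paper's explicit computation, and your reading of the three $\varepsilon$-values is correct.

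The last paragraph, however, heads in the wrong direction. The cross-term cancellation for $k<n<m$ is much milder than you fear, and the paper handles it directly, exactly as you yourself sketched in your second paragraph. In case (a) with $n<m<\phi(k)$, writing $v=v(s,k)$ one finds from (\ref{b1rel}), (\ref{b1rell}) that $p(x_m,v)p(v,x_m)\neq 1$ only for $s\in\{\phi(m)-1,\ \phi(m),\ m-1\}$; the first two insertions yield the \emph{same} comonomial $v(m-1,\phi(m)+1)\,x_m^2\,v(\phi(m)-1,k)$ (because $x_{\phi(m)}=x_m$), and their coefficients add to $p(v_0,x_m)\bigl(q-1+q\cdot q^{-1}-q\bigr)=0$. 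Only the boundary term $s=m-1$ survives. Case (b) is symmetric, and case (c) has the analogous two-term collision at $v=\emptyset$ and $v=x_k$. That is the whole ``bookkeeping.''

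By contrast, your proposed detour $v[k,m]=[v[k,n-1],v[n,m]]$ would require shuffling two \emph{long} comonomials against each other: the product $(v(n-1,k))\cdot (v(m,n))$ already has $\binom{m-k+1}{n-k}$ terms before any cancellation, versus the $m-k+1$ terms you get from bracketing with a single letter. Lemmas \ref{ins} and \ref{ins1} are used in the paper only to justify the bracketings in (\ref{ww}), not to reorganize the shuffle computation. Drop the detour and simply carry out the three-term check you already described; it is a two-line computation in each case.
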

\begin{proof} 
We use induction on $m-k.$ If $m=k,$ then the equality reduces to $x_k=(x_k).$

a). Consider first the case $m<\phi (k).$ By the inductive supposition, we have
$v[k,m-1]=\alpha _k^{m-1}\cdot (w),$ $w=v(m-1,k).$ Using (\ref{spro}), we may write
$$
v[k,m]=\alpha _k^{m-1}\{ (w)(x_m)-p(w,x_m)\cdot (x_m)(w)\}
$$
\begin{equation}
=\alpha _k^{m-1}\sum _{uv=w}\{ p(x_m,v)^{-1}-p(v,x_m)\} (ux_mv),
\label{sum1}
\end{equation}
where $p(v,x_m)=p(w,x_m)p(u,x_m)^{-1}$ because $w=uv.$ 

If $m\leq n,$  then relations (\ref{b1rell}) imply $p(v,x_m)p(x_m,v)=1$ with only one exception 
being $v=w.$
Hence, sum (\ref{sum1}) has just one term. The coefficient of $(x_mw)=(v(m,k))$ equals 
$$
\alpha _k^{m-1}p(w,x_m)(p(w,x_m)^{-1}p(x_m,w)^{-1}-1)=\alpha _k^{m-1}\prod_{i=k}^{m-1}p_{im} \cdot 
(p_{m-1\, m}^{-1}p_{m\, m-1}^{-1}-1).
$$
If $m<n,$ then the latter factor equals $q-1,$ whereas if $m=n,$ then it is $q^2-1=\varepsilon_k^{n}(q-1).$

Suppose that $m>n$ and still $m<\phi (k).$ 
In decomposition (\ref{sum1}), we have $v=v(s,k),$ $k\leq s<m$ and hence
$p(x_m,v)p(v,x_m)=\prod\limits_{t=k}^{s}p_{mt}p_{tm}.$
The product  $p_{mt}p_{tm}$ differs from $1$ only if $t\in \{ \phi(m)-1, \phi (m), \phi (m)+1, m-1\};$
 related values are $q^{-1},$ $q^2,$ $q^{-1},$ $q^{-1}$ if $m>n+1,$ and they are 
$q^{-1},$ $q^2,$ $q^{-2}$ if $m=n+1,$ $\phi (m)+1=m-1.$ This implies 
\begin{equation}
p(x_m,v)p(v,x_m)=
\left\{ \begin{matrix}
q^{-1},\hfill &\hbox{ if } s=\phi (m)-1, \hbox{ or } s=m-1;\hfill \cr
q,\hfill &\hbox{ if }  s=\phi (m);\hfill \cr
1,\hfill &\hbox{otherwise.}\hfill 
                             \end{matrix}
                    \right.
\label{pups}
\end{equation}
Hence, in (\ref{sum1}), only three terms remain with
$s=\phi (m)-1,$ $s=\phi (m),$ and $s=m-1.$ If $s=\phi (m)-1$ or $s=\phi (m),$ then $(ux_mv)$
equals 
$$
ux_mv=v(m-1,\phi (m)+1)x_m^2v(\phi (m)-1,k),
$$
 whereas the coefficient of the comonomial $(ux_mv)$ in sum (\ref{sum1}) is
$$
p(x_m,v_0)^{-1}-p(v_0,x_m)+p(x_m,x_mv_0)^{-1}-p(x_mv_0, x_m),
$$
where $v_0=v(\psi (m)-1,k).$ Taking into account (\ref{pups}), we 
find the above sum:
$$
p(v_0,x_m)(q-1+q\cdot q^{-1}-q)=0.
$$
Thus, in (\ref{sum1}) only one term remains, with $v=v(m-1,k),$ $u=\emptyset $. This term has the required coefficient:
$$
\alpha _k^m=\alpha _k^{m-1}(p(x_m,w)^{-1}-p(w,x_m))=\alpha _k^{m-1}p(w,x_m)(q-1).
$$

b). Consider the case $m>\phi (k).$
By the inductive supposition, we have
$$v[k+1,m]=\alpha _{k+1}^{m}\cdot (w),\ \ \ w=v(m,k+1).$$ Using (\ref{spro}), we get
$$
v[k,m]=\alpha _{k+1}^m\{ (x_k)(w)-p(x_k,w)\cdot (w)(x_k)\}
$$
$$
=\alpha _{k+1}^{m}\sum _{uv=w}\{ p(u,x_k)^{-1}-p(x_k,w)p(x_k,v)^{-1}\} (ux_kv).
$$
\begin{equation}
=\alpha _{k+1}^{m}\sum _{uv=v(m,k+1)}p(x_k,u\{ p(u,x_k)^{-1}p(x_k,u)^{-1}-1\} (ux_kv).
\label{sum2}
\end{equation}

If $k\geq n,$  then $p(u,x_k)p(x_k,u)=1,$ unless $u=w.$
Hence, (\ref{sum2}) has only one term, and the coefficient  equals 
$$
\alpha _{k+1}^{m}p(x_k,w)(p(w,x_k)^{-1}p(x_k,w)^{-1}-1)=\alpha _{k+1}^{m}p(x_k,w)(p_{k+1\, k}^{-1}p_{k\, k+1}^{-1}-1).
$$
If $k>n,$ then the latter factor equals $q-1,$ whereas if $k=n,$ then it is $q^2-1$ $=(q-1)\varepsilon _n^m$ as claimed.

Suppose that $k<n.$ In this case, $x_k=x_t$ with $m>t\stackrel{df}{=} \phi (k)>\phi (n)=n.$ Let $u=v(m,s).$ 

If $s>t+1,$ then $u$ depends only on $x_i,$ $i<k-1,$ 
and relations (\ref{b1rel}), (\ref{b1rell}) imply $p(x_k,u)p(u,x_k)=1.$ 

If $s<t,$ $s\neq k+1,$ then $k+1<n$ (otherwise $s=n=k+1),$ and we have
$p(x_k,u)p(u,x_k)$ $=p_{k-1\, k}p_{kk}p_{k+1\, k}\cdot p_{k\, k-1}p_{kk}p_{k+1\, k}$ $=1$
because $x_t=x_k.$

Hence, three terms remain in (\ref{sum2})  with
$s=t,$ $s=t+1,$ and $s=k+1.$ If $u=v(m, t)$ or $u=v(m, t+1),$ then $ux_kv$ 
$=v(m,t+1)x_k^2v(t-1,k),$ whereas the coefficient of the corresponding  tensor is
$$
p(v(m,t+1),x_k)^{-1}-p(x_k,v(m,t+1))+p(v(m,t),x_k)^{-1}-p(x_k,v(m, t))
$$
$$
 =p(x_k,v(m, t+1))\{ p_{k-1\, k}^{-1}p_{k\, k-1}^{-1}-1+
p_{kk}^{-1}p_{k-1\, k}^{-1}p_{k\, k-1}^{-1}-p_{kk}\} =0
$$
because $p_{kk}=q,$ $p_{k-1\, k}p_{k\, k-1}=q^{-1},$ and $p_{kr}p_{rk}=1$ if $r>t+1.$

Thus, only one term remains in (\ref{sum1}), and
$$
\alpha _k^m=\alpha _{k+1}^m(p(w,x_k)^{-1}-p(x_k,w))=\alpha _{k+1}^mp(x_k,w)(q-1).
$$

c). Let $m=\phi (k)\neq n.$ In this case, $x_m=x_k,$ $p_{kk}=q.$ By definition (\ref{ww}) we have
\begin{equation}
v[k,m]=v[k,m-1]\cdot x_k-q^{-1} p(v(k,m-1),x_m)x_k\cdot v[k,m-1].
\label{kk}
\end{equation}
Case a)  allows us to find the shuffle representation
$$v[k,m-1]=\alpha _k^{m-1} (w),\ \ \ w=v(m-1,k).$$
Hence the right-hand side of (\ref{kk}) in the shuffle form is
$$
\alpha_k^{m-1} \sum _{uv=w}
\left( p(x_k,v)^{-1}- q^{-1} p(v(k,m-1),x_m) \cdot p(u, x_k)^{-1}\right) \cdot (ux_kv) 
$$
\begin{equation}
=\alpha_k^{m-1} \sum _{uv=v(m-1,k)} p(v,x_m)
\left( p(x_k,v)^{-1}p(v,x_k)^{-1}- q^{-1} \right) \cdot (ux_kv). 
\label{bli}
\end{equation}
The coefficient of  $(v(k,m))$ related to $u=\emptyset ,$ $v=v(m-1,k)$ equals 
\begin{equation}
\alpha_k^{m-1}p(v,x_m)\cdot (p(x_k,v)^{-1}p(v,x_k)^{-1}-q^{-1})  
=\alpha_k^{m-1}(1-q^{-1})\cdot \prod _{i=k}^{m-1}p_{im}.
\label{uje}
\end{equation}
Here we have used $x_k=x_m$ and
Eq. (\ref{si45}) with $m\leftarrow m-1,$ $k\leftarrow k.$
It remains to show that all other terms in (\ref{bli}) are canceled. In this case we would have
$$
\varepsilon _k^m=\varepsilon _k^{m-1} (1-q^{-1})(q-1)^{-1}=(1+q)(1-q^{-1})(q-1)^{-1}=1+q^{-1}
$$
 as required.

If  $u=v(m-1,k),$ $v=\emptyset $ or $u=v(m-1,k+1),$ $v=x_k,$ 
 then $$ux_kv=v(m-1,k+1)x_k^2,$$
whereas the total coefficient of the related comonomial is proportional to
$$
1-q^{-1}\cdot p(u,x_m)p(u,x_k)^{-1}+p_{kk}^{-1}-q^{-1}\cdot p_{kk}=0.
$$

Let $u=v(m-1,s),$ $v=v(s-1,k),$ $k+1<s<m.$
The whole coefficient of the comonomial $(ux_kv)$ takes the form
$$
\alpha _k^{m-1}p(v(s-1,k),x_m)\cdot \left( p(x_k, v(s-1,k))^{-1}p(v(s-1,k),x_k)^{-1}-q^{-1})\right) .
$$
The latter factor equals $\prod\limits_{t=k}^{s-1}p_{kt}^{-1}p_{tk}^{-1}-q^{-1}.$ The product 
$p_{kt}p_{tk}$ differs from $1$ only if $t\in \{ k, k+1\}$ and related values are
$q^2$ and  $q^{-1}.$ This implies that the coefficient of $(ux_kv)$ has a factor $q^{-2}\cdot q-q^{-1}=0.$
\end{proof} 

\section{Coproduct formula for $U_q^+({\mathfrak{sp}}_{2n})$}

\begin{theorem} In $U_q^+(\mathfrak{sp}_{2n})$
the coproduct on the elements $v[k,m],$ $k\leq m<2n$ 
 has the following explicit form
\begin{equation}
\Delta (v[k,m])=v[k,m]\otimes 1+g_{km}\otimes v[k,m]
\label{co}
\end{equation}
$$
+\sum _{i=k}^{m-1}\tau _i(1-q^{-1})g_{ki}\, v[i+1,m]\otimes v[k,i],
$$
where $\tau _i=1$ with two  exceptions, being $\tau _{n-1}=1+q^{-1}$ if $m=n,$
and $\tau _n=1+q^{-1}$ if $k=n.$ Here $g_{ki}=\hbox{\rm gr}(v(k,i))=g_kg_{k+1}\cdots g_i.$
\label{cos}
\end{theorem}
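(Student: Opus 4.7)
The plan is to exploit the shuffle representation from Proposition \ref{shu}, which identifies $v[k,m]$ with a scalar multiple of a single shuffle comonomial; this reduces the computation of $\Delta(v[k,m])$ to the trivial splitting formula (\ref{bcopro}) in the shuffle coalgebra, followed by the dictionary (\ref{copro}) between braided and ordinary coproducts.

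First I would write $v[k,m]=\alpha_k^m\,(x_mx_{m-1}\cdots x_k)$ and apply (\ref{bcopro}). Every splitting of the comonomial yields a prefix $(v(m,i+1))$ and a suffix $(v(i,k))$, each proportional by Proposition \ref{shu} to the shuffle image of $v[i+1,m]$ and $v[k,i]$ respectively. Since the PBW generators map to distinct nonzero comonomials, we can read off
\begin{equation*}
\Delta^b(v[k,m])=v[k,m]\,\underline{\otimes}\,1+1\,\underline{\otimes}\,v[k,m]+\sum_{i=k}^{m-1}\frac{\alpha_k^m}{\alpha_{i+1}^m\,\alpha_k^i}\,v[i+1,m]\,\underline{\otimes}\,v[k,i].
\end{equation*}
Converting to $\Delta$ via (\ref{copro}) multiplies each first factor on the right by the grade $g_{ki}$ of the second, and commuting $g_{ki}$ past $v[i+1,m]$ through the skew rule $w\cdot g=\chi^w(g)\,g\cdot w$ contributes the scalar $p(v(i+1,m),v(k,i))$.

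Next I would unpack $\alpha_k^m$ using its definition in Proposition \ref{shu}. The $(q-1)$-exponent collapses to $(m-k)-(i-k)-(m-i-1)=1$, and the part of $\prod_{k\leq s<t\leq m}p_{st}$ not absorbed into the two smaller products is precisely $p(v(k,i),v(i+1,m))$. Combining with the commutation factor above and invoking Definition \ref{slo} together with Lemma \ref{mu} gives the closed form
\begin{equation*}
\tau_i\,(1-q^{-1})=\frac{\varepsilon_k^m}{\varepsilon_k^i\,\varepsilon_{i+1}^m}\,(q-1)\,\mu_k^{m,i}=\frac{\varepsilon_k^m}{\varepsilon_k^i\,\varepsilon_{i+1}^m}\,(q-1)\,\frac{\sigma_k^m}{\sigma_k^i\,\sigma_{i+1}^m}.
\end{equation*}

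The remaining step is a case analysis using the explicit values of $\varepsilon$ from Proposition \ref{shu} and of $\sigma$ from Lemma \ref{sig}, sorted by the positions of $k$, $i+1$, $m$ relative to $n$ and by the coincidences $m=\phi(k)$, $i=n-1$, $i=n$. In every case the three $\sigma$'s all equal $q$ and the $\varepsilon$'s either all equal $1$ or cancel to $1$, yielding $\tau_i=1$, with two exceptions caused by the single asymmetry $\sigma_n^n=q^2$. If $m=n$ and $i=n-1$ then $\sigma_{i+1}^m=\sigma_n^n=q^2$ is uncompensated while the numerator picks up $\varepsilon_k^n=1+q$, yielding $\tau_{n-1}=(1+q)q^{-1}=1+q^{-1}$; the symmetric situation $k=n$, $i=n$ gives $\tau_n=1+q^{-1}$ by the same arithmetic. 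The main obstacle is the bookkeeping through all the subcases, especially at the boundary $m=\phi(k)$ where $v[k,m]$ is built from the modified bracket $[\! [u,v]\! ]$; but this is already absorbed into the values of $\varepsilon$ furnished by Proposition \ref{shu}.
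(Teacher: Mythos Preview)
Your proposal is correct and follows essentially the same route as the paper: invoke Proposition~\ref{shu} to identify $v[k,m]$ with a single comonomial, split it via (\ref{bcopro}), translate back through (\ref{copro}), and reduce $\tau_i$ to the ratio $\varepsilon_k^m\sigma_k^m\big/(\varepsilon_k^i\sigma_k^i\,\varepsilon_{i+1}^m\sigma_{i+1}^m)\cdot q$ using Lemma~\ref{mu}. The only organizational difference is that the paper streamlines your final case analysis by first tabulating the product $\varepsilon_k^m\sigma_k^m$ (which takes only the three values $q$, $q^2$, $q^2+q$), so that the verification of $\tau_i=1$ and of the two exceptions becomes a one-line lookup rather than a case split on the positions of $k,i,m$ relative to $n$ and $\phi(k)$; your phrasing ``the three $\sigma$'s all equal $q$'' is literally false when $m=\phi(k)$ or $i=\phi(k)-1$, etc., but as you note this is exactly compensated by the $\varepsilon$'s, and the paper's packaging makes that compensation transparent.
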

\begin{proof}
By Proposition \ref{shu} we have the shuffle representation
\begin{equation}
v[k,m]=\alpha _k^m\cdot (v(m,k)).
\label{Cco2}
\end{equation}
Using (\ref{bcopro}), it is easy to find the braided coproduct of the comonomial shuffle:  
$$
\Delta ^b_0((v(m,k))=\sum _{i=k}^{m-1}(v(m,i+1))\underline{\otimes }(v(i,k)),
$$
where for short we put $\Delta _0^b(U)=\Delta^b(U)-U\underline{\otimes }1-1\underline{\otimes }U.$
Taking into account (\ref{Cco2}), we have
\begin{equation}
\Delta ^b_0(v[k,m])=\alpha _k^m\cdot \sum _{i=k}^{m-1} (\alpha _{i+1}^m)^{-1}v[i+1,m]\underline{\otimes }(\alpha _i^k)^{-1}v[k,i].
\label{co5}
\end{equation}

Formula  (\ref{copro}) demonstrates that the tensors $u^{(1)}\otimes u^{(1)}$ of the (unbraided) coproduct and tensors
$u^{(1)}_b\underline{\otimes } u^{(1)}_b$ of the braided one are related by 
$u^{(1)}_b=u^{(1)}{\rm gr}(u^{(2)})^{-1},\ $ $u^{(2)}_b=u^{(2)}.$ The equality  (\ref{co5})
provides the values of $u^{(1)}_b$ and $u^{(2)}_b.$ Hence we may find  
$u^{(1)}$ $=\alpha _k^m(\alpha _k^i\alpha _{i+1}^m)^{-1}$
$\cdot v[i+1,m]g_{ki}$ and $u^{(2)}=v[k,i],$ where $g_{ki}={\rm gr}(v[k,i]).$ 
The commutation rules imply 
$$
v[i+1,m]g_{ki}=p(v(i+1,m), v(k,i))g_{ki}v[i+1,m].
$$
Thus, the coproduct has the form (\ref{co}), where 
$$\tau _i(1-q^{-1})=\alpha _k^m(\alpha _k^i\alpha _{i+1}^m)^{-1}p(v(i+1,m), v(k,i)).$$
The definition of $\alpha _k^m$ given in (\ref{shur}) shows that 
$$
\alpha _k^m(\alpha _k^i\alpha _{i+1}^m)^{-1}=\varepsilon _k^m(\varepsilon_k^i\varepsilon_{i+1}^m)^{-1}\cdot p(v(k,i),v(i+1,m))
$$
because 
$$
\left( \prod _{k\leq a<b\leq i}p_{ab}\prod _{i+1\leq a<b\leq m}
p_{ab}\right) ^{-1}\prod _{k\leq a<b\leq m}p_{ab}=p(v(k,i),v(i+1,m)).
$$ 
The definition of $\mu _k^{m,i}$ given in  (\ref{mu1})
implies $$\tau _i(1-q^{-1})=\varepsilon _k^m(\varepsilon_k^i\varepsilon_{i+1}^m)^{-1}
(q-1)\mu _k^{m,i};$$ that is, 
$\tau _i=\varepsilon _k^m(\varepsilon_k^i\varepsilon_{i+1}^m)^{-1}q\mu _k^{m,i}.$ 
By (\ref{mu23}), we have $\mu _k^{m,i}=\sigma _k^m(\sigma _k^i\sigma _{i+1}^m)^{-1}.$
Using (\ref{mu21}) and (\ref{eps}), we see that
\begin{equation}
\varepsilon _k^m\sigma _k^m=\left\{ \begin{matrix}
q^2+q,\hfill &\hbox{if } k\leq n\leq m,  k\neq m;\hfill \cr
q^2,\hfill &\hbox{if } k=n=m;\hfill \cr
q,\hfill &\hbox{otherwise}.\hfill 
                             \end{matrix}
                    \right.
\label{tau1}
\end{equation}
Now, it is easy to check that   the
$\tau $'s have the following elegant form
\begin{equation}
\tau_i=\varepsilon _k^m\sigma _k^m(\varepsilon _k^i\sigma _k^i)^{-1}(\varepsilon _{i+1}^m\sigma _{i+1}^m)^{-1} q
\label{tau1}
\end{equation}
$$
=\left\{ \begin{matrix}
1+q^{-1},\hfill &\hbox{if } i=n-1, m=n; \hbox{ or } k=i=n;\hfill \cr
1,\hfill &\hbox{otherwise}.\hfill 
                             \end{matrix}
                    \right.
$$
\end{proof}

{\bf Remark 1.} \label{t} If $q$ is a root of $1,$ say $q^t=1,$ $t>2,$ then the shuffle representation
is not faithful. Therefore in this case, the formula (\ref{co}) is proved only 
for the Frobenius-Lusztig kernel $u_q(\mathfrak{sp}_{2n}).$ Nevertheless, all tensors in 
(\ref{co}) have degree at most 2 in each variable. At the same time, general results 
on combinatorial representation of Nichols algebras \cite[Section 5.5]{Ang} demonstrate that in case 
$C_n,$ the kernel of the natural projection 
$U_q(\mathfrak{sp}_{2n})\rightarrow u_q(\mathfrak{sp}_{2n})$
is generated by polynomials of degree grater then 2  in (or independent of) each given variable.
Hence (\ref{co}) remains valid in this case as well.

\section{Relations in $U_q^+(\mathfrak{so}_{2n})$}

In what follows, we fix a parameter $q$ such that $q\not= -1.$
If $C$ is a Cartan matrix of type $D_n,$ then relations (\ref{KM1}) take the form
\begin{equation}
p_{ii}=q, \ 1\leq i\leq n;\ \ p_{i\, i-1}p_{i-1\, i}=p_{n-2\, n}p_{n\, n-2}=q^{-1}, \ 1<i<n;\  
\label{Db1rel}
\end{equation}
\begin{equation}
p_{ij}p_{ji}=p_{n-1\, n}p_{n\, n-1}=1,\hbox{ if }  j>i+1\,  \& \, (i,j)\neq (n,n-2).
\label{Db1rell}
\end{equation}
The quantum Borel algebra $U^+_q (\mathfrak{so}_{2n})$ can be defined 
by the condition that the Hopf subalgebras $U_{n-1}$ and $U_n$ generated, respectively,
by $x_1,x_2,\ldots , x_{n-1}$ and  $x_1,x_2,\ldots , x_{n-2}, x_n$ are Hopf algebras
$U_q(\mathfrak{sl}_n)$ of type $A_{n-1},$ and by one additional relation
\begin{equation}
[x_{n-1},x_n]=0.
\label{Drelb}
\end{equation}

Recall that  $x_i,$ $n<i<2n$ denotes the generator $x_{2n-i},$
whereas if $1\leq i<2n,$ then $\phi (i)$
equals $2n-i,$ so that $x_i=x_{\phi (i)},$ see Definition \ref{fis}.

\begin{definition} \rm 
We define words $e(k,m),$ $1\leq k\leq m<2n$ in the following way:
\begin{equation}
e(k,m)=\left\{ \begin{matrix}
x_kx_{k+1}\cdots x_{m-1}x_m,\hfill & \hbox{ if } m<n \hbox{ or }  k>n;\cr
x_kx_{k+1}\cdots x_{n-2}x_nx_{n+1}\cdots x_m, \hfill & \hbox{ if } k<n-1<m;\hfill \cr
x_nx_{n+1}\cdots x_m,\hfill & \hbox{ if } k=n-1<m;\hfill \cr
x_nx_{n+2}x_{n+3}\cdots x_m,\hfill & \hbox{ if } k=n.\hfill
\end{matrix} \right.
\label{De}
\end{equation}
Respectively, $e(m,k)$ is the word opposite to $e(k,m).$ Further, we define 
a word $e^{\prime }(k,m)$ as a word that appears from $e(k,m)$ by replacing
the subword $x_nx_{n+1},$ if any, with $x_{n-1}x_n.$ 
Respectively, $e^{\prime }(m,k)$ is the word opposite to $e^{\prime }(k,m).$

We see that $e(k,m)$ coincides with $v(k,m)$ if $m<n$ or $k>n.$
If $k<n-1<m,$ then $e(k,m)$ appears from $v(k,m)$ by deleting the letter $x_{n-1}$
(but not of $x_{n+1}$!). Similarly, if $k=n,$ then $e(n,m)$ appears from $v(n,m)$ by deleting the letter $x_{n+1},$
whereas if $k=n-1,$ then we have $e(n-1,m)=v(n,m).$
We have to stress that according to this definition $e(n-1,n)=e(n,n)=e(n,n+1)=x_n.$
\label{Dsl}
\end{definition}

\begin{lemma}
If $1\leq k\leq m<2n,$ then 
\begin{equation}
p(e(k,m),e(k,m))=\sigma_k^m =\left\{ \begin{matrix}
q^2,\hfill &\hbox{if } m=\phi (k);\hfill \cr
q,\hfill &\hbox{otherwise}.\hfill 
                             \end{matrix}
                    \right.
\label{Dmu21}
\end{equation}
\label{Dsig}
\end{lemma}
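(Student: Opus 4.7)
My plan is to imitate the proof of Lemma \ref{sig}, unwinding everything through bimultiplicativity of $p$. Writing
\[
\sigma_k^m \;=\; \prod_s p_{y_s y_s} \cdot \prod_{s<t} p_{y_s y_t}\, p_{y_t y_s},
\]
where $y_1, y_2, \ldots$ are the successive letters of $e(k,m)$, the diagonal factors all contribute $q$ (by \ref{Db1rel}) regardless of which case we are in, so the only substantive work is to count the Dynkin-adjacent pairs of positions. In type $D$, a pair of distinct labels $(i,j)$ contributes $q^{-1}$ exactly when $\{i,j\}=\{s,s+1\}$ for $s\leq n-2$ or $\{i,j\}=\{n-2,n\}$; every other distinct pair contributes $1$ (so in particular the new feature is that $(n-1,n)$ is \emph{not} Dynkin-adjacent); and a pair of equal labels contributes $q^2$.

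I would split the argument into four cases matching the four lines of Definition \ref{Dsl}. When $m<n$ or $k>n$ the word $e(k,m)$ coincides with $v(k,m)$ and lives entirely in an $A_{n-1}$ subdiagram, so the argument is identical to the first half of Lemma \ref{sig} and gives $\sigma_k^m = q$. When $k=n-1$ the word $e(n-1,m)=x_n x_{n+1}\cdots x_m$ reads, after applying $\phi$, as $(n,n-1,n-2,\ldots, 2n-m)$; the pair $(x_n,x_{n+1})$ is non-adjacent (crucial!), the pair $(x_n,x_{n+2})$ \emph{is} adjacent (through the branch node $n-2$), and all other non-consecutive pairs are non-adjacent, producing exactly $m-n-1$ adjacent pairs for $m\geq n+2$ and $0$ adjacent pairs for $m=n$ or $m=n+1$; the formula then gives $\sigma_{n-1}^{n+1}=q^2$ (matching $m=\phi(n-1)$) and $\sigma_{n-1}^m=q$ in the remaining subcases. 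The case $k=n$ is essentially the same direct calculation on the word $x_n x_{n+2}\cdots x_m$.

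The main case is $k\leq n-2$ and $m\geq n$, which I would handle by induction on $m-n$ in the style of the inductive step (\ref{si69})--(\ref{si45}) from Lemma \ref{sig}, using
\[
\sigma_k^{m+1} \;=\; \sigma_k^m \cdot q \cdot p(e(k,m), x_{m+1})\, p(x_{m+1}, e(k,m))
\]
for $m\geq n$ (and a slightly modified step from $m=n-2$ to the base case $m=n$, where $x_{n-1}$ is skipped). In the base case $m=n$, the word $e(k,n)=x_k\cdots x_{n-2}x_n$ has $n-k$ letters and exactly $n-1-k$ Dynkin-adjacent consecutive pairs (including the branch edge $(n-2,n)$), yielding $\sigma_k^n=q$. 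The main obstacle, and the point where the proof genuinely differs from Lemma \ref{sig}, is the bookkeeping for $p(e(k,m),x_{m+1})p(x_{m+1},e(k,m))$ when $\phi(m+1)$ has already occurred as a letter of $e(k,m)$: this happens precisely for $m\geq n+1$ because $x_{n+2}\equiv x_{n-2}$ matches the letter $x_{n-2}$ already present in the first half. At such positions the duplicated letter contributes an extra $q^2$ on the diagonal (equal labels) that exactly cancels the deviation from $q^{-1}$ that would otherwise occur, just as in the analogous calculation near $\phi(m)$ in Lemma \ref{sig}. A careful case split on whether $m+1$ equals $n$, $n+1$, $n+2$, or $\phi(k)$ then yields $q^{-1}$ (hence $\sigma_k^{m+1}=q$) in all cases except the symmetric collision $m+1=\phi(k)$, where an extra $q^{-1}$ accumulates and gives $\sigma_k^{\phi(k)}=q^2$ as claimed.
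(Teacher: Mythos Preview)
Your approach is essentially the paper's: the cases where $e(k,m)$ avoids the subword $x_nx_{n+1}$ are reduced to $U_q^+(\mathfrak{sl}_n)$, and the main case $k\leq n-1<m$ is handled by the same induction on $m$ via $\sigma_k^{m+1}=\sigma_k^m\cdot q\cdot p(e(k,m),x_{m+1})\,p(x_{m+1},e(k,m))$, with the cross term computed case-by-case exactly as in the paper's formula (\ref{Dsi45}).

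Two small bookkeeping slips in your final paragraph. At the step $m+1=\phi(k)$ (equivalently $k=\phi(m)-1$) the cross term is $1$, not $q^{-1}$ with ``an extra $q^{-1}$'' on top: the deviation from the generic value $q^{-1}$ is an extra factor of $q$, and that is what pushes $\sigma_k^{\phi(k)}$ up to $q^2$. Your case split also omits the companion step $m=\phi(k)\to m+1$ (equivalently $k=\phi(m)$), where the cross term is $q^{-2}$; this case is needed to bring $\sigma_k^m$ back down to $q$ for $m>\phi(k)$, and without it the induction cannot continue past $m=\phi(k)$. With these two corrections your argument matches the paper's proof.
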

\begin{proof}
If the word $e(k,m)$ does not contain a subword $x_nx_{n+1},$ then it belongs to either
$U_n$ or $U_{n-1}$ that are isomorphic to $U_{q}^+(\mathfrak{sl}_n).$ Hence we have 
$p(e(k,m),e(k,m))$ $=q.$

Let $k \leq n-1 <m.$ In this case $e(k,m+1)=e(k,m)x_{m+1}$ which allows one to use  
induction on $m-n+1.$ If $m=n,$ then $e(k,n)$ does not contain a sub-word $x_nx_{n+1}.$
Because $p(\hbox{-},\hbox{-})$ is a bimultiplicative map, we may decompose 
\begin{equation}
p(e(k,m+1),e(k,m+1))=\sigma _k^m\cdot q\cdot p(e(k,m),x_{m+1})\cdot p(x_{m+1}, e(k,m)).
\label{Dsi69}
\end{equation}
Using relations \ref{Db1rel} and \ref{Db1rell} we shall prove
\begin{equation}
p(e(k,m),x_{m+1})\cdot p(x_{m+1}, e(k,m))=\left\{ \begin{matrix}
1,\hfill &\hbox{if } k=\phi (m)-1;\hfill \cr
q^{-2},\hfill &\hbox{if } k=\phi (m);\hfill \cr
q^{-1},\hfill &\hbox{otherwise.}\hfill 
                             \end{matrix}
                    \right.
\label{Dsi45}
\end{equation}
The left hand side of the above equality is $\prod_{k\leq t\leq m,\, t\neq n-1} p_{t\, m+1}p_{m+1\, t}.$

If $m>n+1,$ then by \ref{Db1rel} and \ref{Db1rell} the factor $p_{t\, m+1}p_{m+1\, t}$ differs from 1 only
if $t\in \{ \phi (m)-2, \phi (m)-1, \phi (m), m \}$ and related values are respectively $q^{-1}, q^2, q^{-1},q^{-1}$
whereas the product of all those values is precisely $q^{-1}.$ Hence, if $k<\phi (m)-1,$ then the whole product is $q^{-1};$
if $k=\phi (m)-1,$ then this is $1;$ if $k=\phi (m),$ then this is $q^{-2};$ if $k>\phi (m),$ then this is again $q^{-1}.$
If $m=n+1,$ then nontrivial factors are related to $t\in \{ n-3, n-2, n, n+1\}$ with values $q^{-1}, q^2, q^{-1},q^{-1},$
respectively. Hence, we arrive to the same conclusion with 
$k<n-2=\phi (m)-1;$ $k=n-2=\phi (m)-1;$ and $k=n-1=\phi (m).$

Finally, if $m=n,$ then there is just one nontrivial factor which relates to $t=n-2$ with value $q^{-1},$
so that if $k\leq n-2=\phi (m)-2,$ then the total product is $q^{-1};$ if $k=n-1=\psi (m)-1,$ then this is $1.$
This completes the proof of (\ref{Dsi45}).

To complete the inductive step we use (\ref{Dsi45}) and inductive hypothesis:
if $k$ $=\phi (m)-1,$ then $\sigma _k^{m+1}$ $=q\cdot q\cdot 1$ $=q^2;$
 if $k$ $=\phi (m),$ then $\sigma _k^{m+1}$ $=q^2\cdot q\cdot q^{-2}$ $=q;$
otherwise $\sigma _k^{m+1}$ $=q\cdot q\cdot q^{-1}$ $=q.$
\end{proof}

\begin{lemma}
If the word $e(k,m)$ contains the subword $x_nx_{n+1};$ that is $k<n<m,$ then 
for each $i,$ $k\leq i<m$ we have 
\begin{equation}
p(e(k,i),e(i+1,m))\cdot p(e(i+1,m),e(k,i))=\sigma _k^m(\sigma _k^i \sigma _{i+1}^m)^{-1}=\mu _k^{m,i}.
\label{Dmu23}
\end{equation}
\label{Dmu}
\end{lemma}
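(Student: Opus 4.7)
The plan is to imitate the proof of Lemma \ref{mu} by applying the bimultiplicative identity (\ref{mu55}), now to the words $a = e(k,i)$ and $b = e(i+1,m)$. This identity gives
$$p(ab, ab) = p(a,a)\,p(b,b)\cdot p(a,b)\,p(b,a).$$
By Lemma \ref{Dsig}, the factors $p(a,a) = \sigma_k^i$ and $p(b,b) = \sigma_{i+1}^m$ are already known, and the middle factors are precisely what we want to compute; hence the whole proof reduces to verifying that $p(ab, ab) = \sigma_k^m$.

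The subtlety absent in the $C_n$ case is that the concatenation $ab = e(k,i)\,e(i+1,m)$ need not coincide with $e(k,m)$ as a word, because Definition \ref{Dsl} skips different letters depending on where $i$ falls relative to $n-1$ and $n$. However, since $p(\cdot,\cdot)$ is bimultiplicative in both arguments, its value depends only on the $G$-degree of each argument -- equivalently, on the multiset of generators appearing there. It therefore suffices to check that $ab$ and $e(k,m)$ involve the same multiset of letters.

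A direct case analysis on the position of $i$ relative to $n-1$ and $n$ settles this: for $i<n-1$, $i=n$, and $i>n$ the word $ab$ coincides with $e(k,m)$ on the nose. The only genuine case is $i=n-1$, where
$$e(k,n-1)\,e(n,m) = x_k \cdots x_{n-2}\,x_{n-1}x_n\,x_{n+2}\cdots x_m,$$
$$e(k,m) = x_k \cdots x_{n-2}\,x_n x_{n+1}\,x_{n+2}\cdots x_m.$$
Since $x_{n+1}$ is by Definition \ref{fis} just another name for $x_{n-1}$, both words carry the same multiset of generators, differing only by the transposition of $x_{n-1}$ and $x_n$. Consequently $p(ab, ab) = p(e(k,m), e(k,m)) = \sigma_k^m$, and rearranging the bimultiplicative identity yields
$$p(e(k,i),e(i+1,m))\,p(e(i+1,m),e(k,i)) = \sigma_k^m\,(\sigma_k^i\,\sigma_{i+1}^m)^{-1}.$$
The equality of this right-hand side with $\mu_k^{m,i}$ is the content of Lemma \ref{mu}, whose proof uses only (\ref{mu55}) and so applies here as well. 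The main (and essentially only) obstacle is the bookkeeping at $i=n-1$, which is dispatched by the convention $x_{n+1}=x_{n-1}$.
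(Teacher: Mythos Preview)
Your proof is correct and follows essentially the same approach as the paper: apply the bimultiplicative identity (\ref{mu55}) to $a=e(k,i)$, $b=e(i+1,m)$, and verify that $p(ab,ab)=\sigma_k^m$ by noting that $ab$ has the same letter content as $e(k,m)$, with the only nontrivial case being $i=n-1$. The paper phrases this last case slightly differently by observing that $e(k,n-1)e(n,m)=e^{\prime}(k,m)$ and that $p(e^{\prime}(k,m),e^{\prime}(k,m))=p(e(k,m),e(k,m))$, but this is exactly your multiset argument in other words.
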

\begin{proof}
If $k<n<m,$ then for $i\neq n-1$ there is a decomposition $e(k,m)=e(k,i)e(i+1,m)$ 
which implies (\ref{Dmu23}) because the form $p(\hbox{-},\hbox{-} )$ is bimultiplicative.
 For $i=n-1$ there is another equality 
$e^{\prime }(k,m)=e(k,i)e(i+1,m).$ Certainly  $p(e^{\prime }(k,m), e^{\prime }(k,m))$
$=p(e(k,m), e(k,m))$ $=\sigma _k^m.$ Hence (\ref{Dmu23}) is still valid.
\end{proof}

\smallskip

\

We define the bracketing of $e(k,m),$ $k\leq m<2n$ as follows.
\begin{equation}
e[k,m]=\left\{ \begin{matrix} [[[\ldots [x_k,x_{k+1}], \ldots ],x_{m-1}], x_m],\hfill 
&\hbox{if } m<\phi (k);\hfill \cr
 [x_k,[x_{k+1},[\ldots ,[x_{m-1},x_m]\ldots ]]],\hfill &\hbox{if } m>\phi (k);\hfill \cr 
[\! [e[k,m-1],x_m]\! ],\hfill &\hbox{if } m=\phi (k),\hfill 
\end{matrix}\right.
\label{Dww}
\end{equation}
where as above  $[\! [u,v]\! ]=uv-q^{-1}p(u,v)vu.$

Conditional identity (\ref{ind}) demonstrates that the value of  $e[k,m]$ in $U_q^+(\mathfrak{so}_{2n})$
is independent of the precise arrangement of brackets, provided that $m\leq n$ or $k\geq n.$
\begin{lemma} 
If  $k<n<m<\phi (k),$ then the value in $U_q^+(\mathfrak{so}_{2n})$ 
of the bracketed word $[y_kx_{n+1}x_{n+2}\cdots x_m],$ where $y_k=e[k,n],$
is independent of the precise arrangement of brackets. 
\label{Dins}
\end{lemma}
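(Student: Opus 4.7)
The plan is to follow the template of the $C_n$-case Lemma \ref{ins}: apply Lemma \ref{indle} to the sequence $y_k, x_{n+1}, x_{n+2}, \ldots, x_m$. The pairwise vanishings $[x_s, x_t] = 0$ for $n + 1 \leq s$ and $t \geq s + 2$ are immediate from (\ref{Db1rel})-(\ref{Db1rell}), since the generators $x_{\phi(s)}, x_{\phi(t)}$ all lie in $\{x_1, \ldots, x_{n-1}\}$, avoid the exceptional pair $(n-2, n)$, and sit at Dynkin distance $\geq 2$. What remains is $[y_k, x_t] = 0$ for $n + 2 \leq t \leq m$; equivalently, writing $r = \phi(t)$ and using $m < \phi(k)$ (which forces $r > k$), the task is to prove
\begin{equation*}
[e[k, n], x_r] = 0 \qquad \text{for every } r \text{ with } k < r \leq n - 2.
\end{equation*}

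Next I would observe that the value of $e[k, n]$ is arrangement-independent (Lemma \ref{indle}): every non-adjacent pair in $x_k, x_{k+1}, \ldots, x_{n-2}, x_n$ skew-commutes, using $p_{in} p_{ni} = 1$ for $i \leq n-3$ from (\ref{Db1rell}). For $r \geq k + 2$, I would split $e[k, n] = [e[k, r-2], e[r-1, n]]$. Since $e[k, r-2]$ only involves $x_k, \ldots, x_{r-2}$, each at Dynkin distance $\geq 2$ from $x_r$ (no exceptional pair, as $r \leq n-2$), the conditional Jacobi identity (\ref{jak3}) gives
\begin{equation*}
[e[k, n], x_r] = [e[k, r-2], [e[r-1, n], x_r]].
\end{equation*}
Setting $a = r - 1$, this reduces the claim to $[e[a, n], x_{a+1}] = 0$. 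For $r = k + 1$, no reduction is needed: the claim is already of this form with $a = k$. In either case the remaining task is the same: show $[e[a, n], x_{a+1}] = 0$ for $a \leq n - 3$.

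The hard part will be this base case. My plan is to argue inside the Hopf subalgebra $U_n$ generated by $x_1, \ldots, x_{n-2}, x_n$, which is isomorphic to $U_q^+(\mathfrak{sl}_n)$ via the ordering $x_1 > \ldots > x_{n-2} > x_n$. The word $e(a, n) \cdot x_{a+1}$ is a non-linear standard Lyndon--Shirshov word in $U_n$ whose minimal-length standard decomposition is $\bigl(e(a, n)\bigr) \cdot (x_{a+1})$: any shorter prefix $v$ produces a complementary suffix whose terminal single-letter split with $x_{a+1}$ fails the Lyndon inequality $vw > wv$. Therefore the standard bracketing of $e(a, n) \cdot x_{a+1}$ coincides with $[e[a, n], x_{a+1}]$, and the $A_{n-1}$-analog of Lemma \ref{nul} (the third statement of \cite[Theorem $A_n$]{Kh4} applied inside $U_n$) forces it to vanish. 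This closes the argument.
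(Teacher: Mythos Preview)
Your proof is correct and follows essentially the same route as the paper's: reduce via Lemma~\ref{indle} to the vanishings $[y_k,x_t]=0$, apply the conditional Jacobi identity (\ref{jak3}) to isolate $[e[a,n],x_{a+1}]$, and kill this inside the $A_{n-1}$-subalgebra $U_n$ by recognizing $e(a,n)x_{a+1}$ as a standard word with the indicated standard bracketing and invoking \cite[Theorem $A_n$]{Kh4}. You are slightly more explicit than the paper in two places---you separately treat the boundary case $r=k+1$ (where the split $e[k,r-2]$ degenerates) and you spell out why the shorter prefixes fail to give a standard suffix---but the argument is otherwise the same.
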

\begin{proof} 
To apply (\ref{ind}), it suffices to check 
$[y_k,x_t]=0,$  where $n+1<t\leq m$ or, equivalently, $\phi (m)\leq t<n-1.$ 
 We have
$$
[y_k,x_t]=\hbox{\Large[}[e[k,t-2],e[t-1,n]],x_t\hbox{\Large]}
=\hbox{\Large[}e[k,t-2], [e[t-1,n],x_t]\hbox{\Large]}.
$$
The polynomial $[e[t-1,n],x_t]$ is independent of $x_{n-1},$ so that it belongs to the Hopf subalgebra $U_n=U_{q}^+(\mathfrak{sl}_{n}).$
By \cite[Theorem $A_n$]{Kh4}, the element $[e[t-1,n],x_t]$ equals zero in $U_{q}^+(\mathfrak{sl}_{n})$
because the word $e(t-1,n)x_t$ is standard, and the standard bracketing is $[e[t-1,n],x_t].$ 
\end{proof}
\begin{lemma} 
If $k<n,$ $ \phi (k)<m,$ then the value in $U_q^+(\mathfrak{so}_{2n})$ 
of the bracketed word $[x_kx_{k+1}\cdots x_{n-2}x_ny_m],$ where $y_m=e[n+1,m],$
is independent of the precise arrangement of brackets. 
\label{Dins1}
\end{lemma}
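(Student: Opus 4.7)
The plan is to mirror the proof of Lemma \ref{ins1}. To invoke the bracket-independence afforded by Lemma \ref{indle} on the sequence $x_k,x_{k+1},\ldots,x_{n-2},x_n,y_m$, I must verify that every pair at distance $\geq 2$ in this sequence skew-commutes. Among the prefix letters $\{x_k,\ldots,x_{n-2},x_n\}$, each distance-$\geq 2$ pair vanishes by the defining relations (\ref{Db1rel})--(\ref{Db1rell}): the only $D_n$-Dynkin edges appearing in the index set $\{k,\ldots,n-2,n\}$ are the chain edges $(i,i+1)$ for $k\leq i\leq n-3$ together with the fork edge $(n-2,n)$, and all of these link consecutive entries of the sequence. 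Thus the only genuine checks are $[x_t,y_m]=0$ for $k\leq t\leq n-2$.

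Next I would note that $[x_t,y_m]$ lies in the Hopf subalgebra $U_{n-1}\cong U_q^+(\mathfrak{sl}_n)$, since the letters of $e(n+1,m)$, in the original labelling, are $x_{n-1},x_{n-2},\ldots,x_{2n-m}$ and do not include $x_n$. A direct bimultiplicativity computation then gives
\begin{equation*}
p(x_t,y_m)\,p(y_m,x_t)=q^{-1}\cdot q^{2}\cdot q^{-1}=1,
\end{equation*}
because the hypotheses $\phi(k)<m$ and $k\leq t\leq n-2$ force $x_{t-1},x_t,x_{t+1}$ all to appear in $y_m$ (while $x_n$ does not, so the anomalous factor attached to the edge $(n-2,n)$ never enters). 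The antisymmetry identity (\ref{cha}) therefore yields $[x_t,y_m]=-p(x_t,y_m)\,[y_m,x_t]$, reducing the problem to showing $[y_m,x_t]=0$.

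The closing move is the same renaming trick used in Lemma \ref{ins1}: the formal substitution $x_{n+j}\leftarrow x_{k+j-1}$, interpreted through the Dynkin-diagram involution of the $A_{n-1}$-subalgebra $U_{n-1}$, converts the equality $[y_m,x_t]=0$ into one of the form $[e[k',n-1],x_{t'}]=0$, where $e(k',n-1)\cdot x_{t'}$ is a standard Lyndon--Shirshov word in $U_{n-1}$ whose weight is not a positive root of $A_{n-1}$ because the letter $x_{t'}$ occurs twice. Consequently its standard bracketing vanishes by the same clause of \cite[Theorem $A_n$]{Kh4} that underlies Lemma \ref{nul}. The delicate point, exactly as in the $C_n$ case, will be verifying that after the renaming our specific bracketing $[y_m,x_t]$ matches the standard bracketing of the word on the nose; this is the step I expect to require the most care.
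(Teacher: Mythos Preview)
Your proposal is correct and follows essentially the same route as the paper: reduce to $[x_t,y_m]=0$ for $k\le t\le n-2$, observe that this element lives in $U_{n-1}\cong U_q^+(\mathfrak{sl}_n)$, use the bimultiplicative computation $p(x_t,y_m)p(y_m,x_t)=1$ together with (\ref{cha}) to pass to $[y_m,x_t]$, and then invoke the same renaming $x_{n+j}\leftarrow x_{k+j-1}$ to land on a type-$A$ vanishing already established in the proof of Lemma~\ref{Dins}. Your extra checks---that the distance-$\ge 2$ pairs among $x_k,\ldots,x_{n-2},x_n$ skew-commute, and that the bracketing matches the standard one after renaming---are points the paper leaves implicit, so you are if anything more careful than the original.
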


\begin{proof}
To apply (\ref{ind}), we need the equalities $[x_t,y_m]=0,$ $k\leq t<n-1.$ 
The polynomial $[x_t,y_m]$ belongs to the 
subalgebra $U_{n-1}.$ 
Moreover,  $[x_t,y_m]$ is proportional to $[y_m, x_t]$ due to antisymmetry identity (\ref{cha}) because 
$p(x_t, y_m)p(y_m,x_t)$ $=p_{t\, t+1}p_{tt}p_{t\, t-1}\cdot p_{t+1\, t}p_{tt}p_{t-1\, t}$ $=1.$
The equality $[y_m, x_t]=0$ turns to the proved above equality $[e[k,n],x_t]=0$ if one renames  the variables
$x_{n+1}\leftarrow x_k,$ $x_{n+2}\leftarrow x_{k+1}, \ldots .$
\end{proof}

\section{PBW generators of $U_q^+(\mathfrak{so}_{2n})$}

\begin{proposition} If $q\neq -1,$ then
values of the elements  $e[k,m],$ $k\leq m<\phi (k)$ form a set of PBW generators with infinite heights
for the algebra $U_q^+(\mathfrak{so}_{2n})$ over {\bf k}$[G].$ 
\label{DstrB}
\end{proposition}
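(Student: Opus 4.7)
The plan is to follow the strategy of Proposition \ref{strB} (the $C_n$ case), considerably simplified here by the absence of a boundary $m = \phi(k)$ in the index set. By \cite[Theorem $D_n$]{Kh4}, the values in $U_q^+(\mathfrak{so}_{2n})$ of the standard bracketings $[e(k,m)]$ of Lyndon--Shirshov standard words running over the $n(n-1)$ positive roots of $D_n$ form a set of PBW generators of infinite heights over ${\bf k}[G]$. Since the words $e(k,m)$ with $k \leq m < \phi(k)$ are precisely those standard Lyndon--Shirshov words (with the skipping of $x_{n-1}$ already built into Definition \ref{Dsl}), it suffices to prove that in $U_q^+(\mathfrak{so}_{2n})$ the element $e[k,m]$ obtained by the bracketing \ref{Dww} coincides with $[e(k,m)]$.

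I would argue by induction on $m - k$. The case $m = k$ is immediate. If $m \leq n$ or $k \geq n$, then $e(k,m)$ avoids the subword $x_nx_{n+1}$ and lies in one of the Hopf subalgebras $U_{n-1}$ or $U_n$, each isomorphic to $U_q^+(\mathfrak{sl}_n)$; the relations \ref{Db1rel}, \ref{Db1rell} then give $[x_i, x_j] = 0$ for every pair of non-adjacent letters of $e(k,m)$, so Lemma \ref{indle} equates any two bracketings of $e(k,m)$, and in particular $e[k,m] = [e(k,m)]$.

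The main case is $k < n < m < \phi(k)$, which forces $k \leq n-2$. Mirroring \ref{wsk}, the standard bracketing should be produced by a $D_n$-analog of \cite[Lemma 7.18]{Kh4}:
\begin{equation*}
[e(k,m)] = [x_k, [e(k+1, m)]] \text{ if } m < \phi(k)-1, \qquad [e(k,m)] = [[e(k, m-1)], x_m] \text{ if } m = \phi(k)-1.
\end{equation*}
The case $m = \phi(k) - 1$ is immediate from the inductive hypothesis. For $m < \phi(k) - 1$ (forcing $k \leq n-3$), the inductive hypothesis combined with Lemma \ref{Dins} gives $[e(k+1, m)] = e[k+1, m] = [e[k+1, n], e[n+1, m]]$. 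The second factor $e[n+1, m]$ is built only from generators $x_t = x_{\phi(t)}$ with $n+1 \leq t \leq m$; since $m \leq \phi(k)-2$, we have $\phi(t) \geq \phi(m) \geq k+2$, so each such $x_t$ is non-adjacent to $x_k$ in the Dynkin diagram, and \ref{Db1rel}, \ref{Db1rell} yield $[x_k, e[n+1, m]] = 0$. The conditional Jacobi identity \ref{jak3} then produces
\begin{equation*}
[x_k, [e[k+1, n], e[n+1, m]]] = [[x_k, e[k+1, n]], e[n+1, m]] = [e[k, n], e[n+1, m]] = e[k, m],
\end{equation*}
where the last equality uses Lemma \ref{Dins} once more, completing the inductive step.

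The main obstacle I anticipate is the careful identification of the standard Lyndon--Shirshov words of the $D_n$ root system with the $e(k,m)$ of Definition \ref{Dsl}, and the verification that the recurrences drawn from \cite{Kh4} really take the form displayed above once the absence of $x_{n-1}$ from $e(k,m)$ (and the degenerate identifications $e(n-1,n) = e(n,n) = e(n,n+1) = x_n$) is taken into account; otherwise the inductive scheme is essentially a translation of the $C_n$ argument.
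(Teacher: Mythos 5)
Your proposal is correct and follows essentially the same route as the paper: both reduce to \cite[Theorem $D_n$]{Kh4}, induct on $m-k$ via the recurrence of \cite[Lemma 7.25]{Kh4} (your conjectured ``$D_n$-analog of Lemma 7.18''), and in the case $m<\phi(k)-1$ combine Lemma \ref{Dins} with the vanishing $[x_k,e[n+1,m]]=0$ and the conditional Jacobi identity (\ref{jak3}). Your observation that the $C_n$ complication at $m=\phi(k)$ disappears here is exactly why the paper's proof is shorter than that of Proposition \ref{strB}.
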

\begin{proof} All words $e(k,m),$ $k\leq m<\phi (k)$ are standard Lyndon-Shirshov words,
and by  \cite[Theorem $D_n,$ p. 225]{Kh4} under the standard bracketing, say $[e(k,m)],$ 
 they form a set of PBW generators with infinite heights.

By induction on $m-k$  we prove that the values in $U_q^+(\mathfrak{so}_{2n})$
of $[e(k,m)]$ equal the values of $e[k,m]$  with bracketing given in (\ref{Dww}).

If $m\leq n,$ then by  Lemma \ref{indle} we have nothing to prove.

If $k<n<m,$ then according to \cite[Lemma 7.25]{Kh4}, the brackets in $[e(k,m)]$ 
are set by the following recurrence formulae (we note that $[e(k, m)]=[e_{k\, \phi (m)}]$
 in the notations of \cite{Kh4}):
\begin{equation}
[e(k,m)]=\left\{ 
\begin{matrix}
[x_k[e(k+1, m)]], \hfill & \hbox{if  } m<\phi (k)-1; \hfill \cr
[[e(k, m-1)]x_m], \hfill & \hbox{if  } m=\phi (k)-1.\hfill 
\end{matrix} \right. 
\label{Dwsk}
\end{equation}
In the latter case the induction applies directly. In the former case using induction and Lemma \ref{Dins} we have 
$[e(k+1, m)]=e[k+1, m]=[e[k+1,n], e[n+1,m]].$ At the same time $[x_k,x_t]=0,$  $n< t\leq m$ because $x_t=x_{\phi (t)}$
and $\phi (t)\geq \phi (m)>k+1$ $\, \& \, (k,\phi (t))\neq (n-2,n).$
This implies $[x_k, e[n+1,m]]=0.$ Applying the conditional identity (\ref{jak3}), we get 
$$
[e(k, m)]=[x_k[e[k+1,n], e[n+1,m]]]={\big [}[x_ke[k+1,n]], e[n+1,m]{\big ]}=e[k,m].
$$
\end{proof}

\section{Shuffle representation for $U_q^+(\mathfrak{so}_{2n})$}

In this section, we are going to find the shuffle representation of 
elements $e[k,m],$ $1\leq k\leq m<2n.$ If $e(k,m)$ has not  
$x_nx_{n+1}$ as a subword, then $e[k,m]$ belongs to a Hopf subalgebra of type $A_n$: this is either 
$U_{n-1}$ $=U_q^+(\mathfrak{sl}_n)$ or $U_n$ $=U_q^+(\mathfrak{sl}_n).$
At the same time in the considered above case $C_n,$ the elements $x_1,x_2, \ldots , x_{n-1}$
generate precisely  a Hopf subalgebra $U_q(\mathfrak{sl}_n).$ Hence we may apply Proposition \ref{shu}:
\begin{equation}
e[k,m]=\alpha _k^m\cdot (e(m,k)),
\label{Dng0}
\end{equation}
where
\begin{equation}
\alpha _k^m=\left\{ 
\begin{matrix}
(q-1)^{m-k}\cdot \prod\limits _{k\leq i<j\leq m}p_{ij}, \hfill & \hbox{ if } m<n \hbox{ or } k>n; \hfill \cr
\ & \ \cr
(q-1)^{m-n-1}\cdot \prod\limits _{n\leq i<j\leq m,\,  i,j\neq n+1}p_{ij},\hfill  & \hbox{ if } k=n;\hfill \cr 
\ & \ \cr
(q-1)^{n-k-1}\cdot \prod\limits _{k\leq i<j\leq m,\,  i,j\neq n-1}p_{ij},\hfill & \hbox{ if } m=n.\hfill 
\end{matrix}
\right.
\label{Dng3}
\end{equation}

\begin{proposition}
Let $1\leq k<n<m<2n.$ In the shuffle representation, we have
\begin{equation}
e[k,m]=\alpha _k^m\cdot \{ (e(m,k))+p_{n-1,n}(e^{\prime }(m,k))\} , 
\label{Dshur}
\end{equation}
where
\begin{equation}
\alpha _k^m=\epsilon _k^m (q-1)^{m-k-1}\cdot \prod _{k\leq i<j\leq m,\,  i,j\neq n-1}p_{ij}
\label{Dshur1}
\end{equation}
with
\begin{equation}
\epsilon _k^m=\left\{ \begin{matrix} q^{-1},\hfill & \hbox{ if } m=\phi (k);\hfill \cr
1, \hfill & \hbox{otherwise.}\hfill 
\end{matrix}
\right.
\label{Dshur2}
\end{equation}
\label{Dshu}
\end{proposition}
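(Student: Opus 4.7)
The plan is to prove Proposition \ref{Dshu} by induction on $m-k$, paralleling the proof of Proposition \ref{shu} for the $C_n$ case but now carrying two comonomials in parallel throughout. The situations in which $e(k,m)$ contains no $x_nx_{n+1}$ subword serve as the base of the induction: there $e[k,m]$ lies in an $A_{n-1}$-type subalgebra ($U_{n-1}$ or $U_n$), and the single-comonomial representation (\ref{Dng0})--(\ref{Dng3}) holds by Proposition \ref{shu}. The inductive hypothesis is the statement of Proposition \ref{Dshu} itself for smaller values of $m-k$ with $k<n<m$.

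The inductive step splits according to the three cases of (\ref{Dww}). In case a), where $m<\phi(k)$, one writes $e[k,m]=[e[k,m-1],x_m]$ and translates the bracket to the shuffle via (\ref{spro}), obtaining a sum over decompositions $e(m-1,k)=uv$ with coefficients controlled by the factor $p(x_m,v)p(v,x_m)$. By a calculation analogous to (\ref{pups}) using (\ref{Db1rel})--(\ref{Db1rell}) (in the spirit of the proof of Lemma \ref{Dsig}), this factor differs from $1$ only at a few specific decomposition points, and all other contributions cancel pairwise. When $m-1>n$, the inductive hypothesis provides the two-comonomial form of $e[k,m-1]$, and the shuffle computation is carried out on both streams $(e(m-1,k))$ and $(e'(m-1,k))$. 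In the transitional subcase $m-1=n$, with $k\leq n-2$, the input is the single-comonomial (\ref{Dng0}), and the second output comonomial $p_{n-1,n}(e'(n+1,k))$ arises from the decomposition $u=x_n$, $v=e(n-2,k)$ because $x_{n-2}$ occurs exactly once in $e(n,k)$ and $p_{n-2,n-1}p_{n-1,n-2}=q^{-1}$; the identity $p_{n,n-1}p_{n-1,n}=1$ from (\ref{Db1rell}) then makes the ratio of the two surviving coefficients equal to $p_{n-1,n}$.

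Case b), with $m>\phi(k)$, is treated dually via $e[k,m]=[x_k,e[k+1,m]]$, with transitional subcase $k+1=n$ so that $e[n,m]\in U_n$. Case c), with $m=\phi(k)$, uses the twisted bracket $[\! [ e[k,m-1],x_m ]\! ]$; case a) supplies the representation of $e[k,m-1]$, and the computation parallels (\ref{bli})--(\ref{uje}) in each stream, with the extra $-q^{-1}$ factor producing the shift $\epsilon_k^m=q^{-1}$ required by (\ref{Dshur2}). The principal obstacle is to verify in the main step of case a) that the two streams $(e(m-1,k))$ and $(e'(m-1,k))$ contribute coherently, without mixing or producing extraneous comonomials; this relies on the observation that $p(\cdot,x_m)$ depends only on the weight of its argument (which is shared by $e$ and $e'$), so the cancellation analysis is identical on both streams and the $p_{n-1,n}$ prefactor is preserved by the recursion.
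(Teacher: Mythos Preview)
Your plan is essentially the paper's own proof: the same three-case split according to (\ref{Dww}), the same use of (\ref{spro}) to expand the bracket as a sum over factorizations $uv$ of the comonomial, the same cancellation analysis leaving only the desired terms, and the same parallel treatment of the two streams $(e(m,k))$ and $(e'(m,k))$ justified by the weight-dependence of $p(\hbox{-},x_m)$.

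There is one slip in your description of case b). At the transitional value $k=n-1$ the recursion $e[k,m]=[x_k,e[k+1,m]]$ is \emph{not} the right one: by Definition \ref{Dsl} the word $e(n-1,m)$ is $x_nx_{n+1}\cdots x_m$, whose first letter is $x_n$, not $x_{n-1}$. Hence the correct base-case recursion is
\[
e[n-1,m]=[x_n,\,e[n+1,m]],
\]
and the single-comonomial input is $e[n+1,m]\in U_{n-1}$ (via (\ref{Dng0})--(\ref{Dng3}) with $k=n+1>n$), not $e[n,m]\in U_n$. With this correction your argument goes through exactly as in the paper.
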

\begin{proof} 

a). Consider first the case $m<\phi (k).$ We use induction on $m-n.$ Let $m-n=1.$
Condition $n+1=m<\phi (k)$ implies $k<n-1.$ Hence 
by Lemma \ref{Dins} we have $e[k,n+1]=[e[k,n],x_{n+1}],$
whereas (\ref{Dng0}) implies $e[k,n]=\alpha _k^n(e(n,k)).$ 
Using (\ref{spro}), we may write
$$
e[k,n+1]=\alpha _k^n\{ (e(n,k))(x_{n+1})-p(e(n,k),x_{n+1})\cdot (x_{n+1})(e(n,k))\}
$$
\begin{equation}
=\alpha _k^{n}\sum _{uv=e(n,k)}\{ p(x_{n+1},v)^{-1}-p(v,x_{n+1})\} (ux_{n+1}v),
\label{Dsum1}
\end{equation}
where $p(v,x_{n+1})=p(e(n,k),x_{n+1})p(u,x_{n+1})^{-1}$ because $e(n,k)=uv.$
We have 
$$
p(x_{n+1},v)^{-1}-p(v,x_{n+1})=p(v,x_{n+1})\cdot \{ p(x_{n+1},v)^{-1}p(v,x_{n+1})^{-1}-1 \}.
$$
At the same time equality $x_{n+1}=x_{n-1}$ and relations (\ref{Db1rel}), (\ref{Db1rell}) imply 
$$
p(x_{n+1},v)p(v,x_{n+1})=\left\{ \begin{matrix} q^{-1},\hfill & \hbox{ if } v=e(n,k) \hbox{ or } v=e(n-2,k);\hfill \cr
1, \hfill & \hbox{otherwise.}\hfill 
\end{matrix}
\right.
$$ 
Hence in the decomposition (\ref{Dsum1}) two terms remain
$$
\alpha _k^np(e(n,k),x_{n+1})(q-1)(x_{n+1}x_nx_{n-2}\cdots x_k)=\alpha _k^{n+1} (e(n+1,k))
$$
and
$$
\alpha _k^np(e(n-2,k),x_{n+1})(q-1)(x_nx_{n-1}x_{n-2}\cdots x_k)=\alpha _k^{n+1} p_{n-1,n}(e^{\prime }(n+1,k)),
$$
for $p_{n,n+1}^{-1}=p_{n,n-1}^{-1}=p_{n-1,n}$ due to (\ref{Db1rell}). This completes the first step of induction.

Suppose that equalities (\ref{Dshur}) and (\ref{Dshur1}) are valid and still $m+1<\phi (k).$ Then Lemma  \ref{Dins} implies 
$e[k,m+1]=[e[k,m],x_{m+1}].$ By (\ref{spro}) we have 
$$
[(e(m,k)),(x_{m+1})]=\sum _{uv=e(m,k)}p(v,x_{m+1})\cdot \{ p(x_{m+1},v)^{-1}p(v,x_{m+1})^{-1}-1 \}(ux_{m+1}v).
$$
Relations (\ref{Db1rel}), (\ref{Db1rell}) imply that 
$$
p(x_{m+1},v)p(v,x_{m+1})=\left\{ \begin{matrix} q, \hfill & \hbox{ if } v=e(\phi (m)-1,k);\hfill \cr
q^{-1}, \hfill & \hbox{ if } v=e(m,k) \hbox{ or } v=e(\phi (m)-2,k) ;\hfill \cr
1,  \hfill & \hbox{ otherwise.}\hfill
\end{matrix}\right.
$$
Thus in the decomposition just three terms remain. Two of them, corresponding to 
$v=e(\phi (m)-1,k)$ and $v=e(\phi (m)-2,k),$ are canceled:
$$
p(x_{\phi (m)-1},x_{m+1})(q^{-1}-1)+(q-1)=q(q^{-1}-1)+(q-1)=0.
$$
Thus
$$
[(e(m,k)),(x_{m+1})]=\{ (q-1)\cdot \prod _{k\leq i\leq m,\ i\neq n-1}p_{i\, m+1} \} \, (e(m+1,k)).
$$
In perfect analogy, we have
$$
[(e^{\prime }(m,k)),(x_{m+1})]=\{ (q-1)\cdot \prod _{k\leq i\leq m,\ i\neq n-1}p_{i\, m+1} \} \, (e^{\prime }(m+1,k)).
$$
The inductive supposition yields $e[m,k]=\alpha _k^m\cdot \{(e(m,k))+p_{n-1,n}(e^{\prime }(k,m))\} .$
Hence to complete the induction, it suffices to note that 
$$\alpha _k^{m+1}=\alpha _k^m\cdot (q-1)\cdot \prod _{k\leq i\leq m,\ i\neq n-1}p_{i\, m+1}.$$

b). Similarly consider the case $m>\phi (k)$ using downward induction on $n-k.$
Let $k=n-1.$ Condition $m>\phi (k)$ implies $m\geq n+2.$ Hence 
by Lemma \ref{Dins1} we have $e[n-1,m]=[x_n,e[n+1,m]],$
whereas (\ref{Dng0}) and (\ref{Dng3}) imply $e[n+1,m]$ $=\alpha _{n+1}^m(e(m,n+1)).$ 
Using (\ref{spro}), we may write
$$
e[n-1,m]=\alpha _{n+1}^m \{ (x_n)(e(m,n+1))-p(x_n,e(m,n+1))\cdot (e(m,n+1))(x_n)\}
$$
\begin{equation}
=\alpha _{n+1}^{m}\sum _{uv=e(m,n+1)}\{ p(u,x_n)^{-1}-p(x_n,u)\} (ux_nv),
\label{Dsum2}
\end{equation}
where $p(x_n,u)=p(x_n, e(m,n+1))p(x_{n},v)^{-1}$ because $e(m,n+1)=uv.$
We have 
$$
p(u,x_n)^{-1}-p(x_n,u)=p(x_n,u)\cdot \{ p(u,x_n)^{-1}p(x_n,u)^{-1}-1 \}.
$$
Equality $x_{n+1}=x_{n-1}$ and relations (\ref{Db1rel}), (\ref{Db1rell})
imply that
$p(u,x_n)p(x_n,u)=1$ unless $u=e(m,n+1)$ or $u= e(m,n+2).$
In these two exceptional cases, the product equals $p_{n+2\, n}p_{n\, n+2}=q^{-1}$ because $p_{n\, n+1}p_{n+1\, n}=1.$ 
Hence in the decomposition (\ref{Dsum2})  two terms remain
$$
\alpha _{n+1}^{m}p(x_n,e(m,n+1))(q-1)(x_m\cdots x_{n+1}x_n)=\alpha _{n-1}^{m}(e(m,n-1))
$$
and
$$
\alpha _{n+1}^{m}p(x_n,e(m,n+2))(q-1)(x_m\cdots x_{n+2}x_nx_{n+1})=\alpha _{n-1}^{m}\cdot p_{n\, n+1}^{-1}(e^{\prime }(m,n-1)).
$$
This completes the first step of induction because $p_{n\, n+1}^{-1}=p_{n-1\, n}.$

Suppose that equalities  (\ref{Dshur}) and (\ref{Dshur})  are valid and still $m>\phi (k-1)=\phi (k)+1.$ Lemma  \ref{Dins1} implies 
$e[k-1,m]=[x_{k-1},e[k,m]].$ We have  
$$
[(x_{k-1}),(e(m,k))]=\sum _{uv=e(m,k)}p(x_{k-1},u)\cdot \{ p(u,x_{k-1})^{-1}p(x_{k-1},u)^{-1}-1 \}(ux_{k-1}v).
$$
Relations (\ref{Db1rel}), (\ref{Db1rell}) imply that 
$$
p(u,x_{k-1})p(x_{k-1},u)=\left\{ \begin{matrix} q \hfill & \hbox{ if } u=e(m, \phi (k)+1);\hfill \cr
q^{-1} \hfill & \hbox{ if } u=e(m,k) \hbox{ or } u=e(\phi (k)+2,k) ;\hfill \cr
1  \hfill & \hbox{ otherwise.}\hfill
\end{matrix}\right.
$$
Hence in the decomposition (\ref{Dsum1}) just three terms remain. Two of them, corresponding to 
$u=e(m, \phi (k)+1)$ and $u=e(m,\phi (k)),$ are canceled:
$$
p(x_{k-1},x_{\phi (k)+1})(q^{-1}-1)+(q-1)=q(q^{-1}-1)+(q-1)=0.
$$
Thus
$$
[(x_{k-1}),(e(m,k)]=\{ (q-1)\cdot \prod _{k\leq j\leq m,\ j\neq n-1}p_{k-1\,j} \} \, (v(m,k-1)).
$$
In perfect analogy, we have
$$
[(x_{k-1}),(e^{\prime }(m,k)]=\{ (q-1)\cdot \prod _{k\leq j\leq m,\ j\neq n-1}p_{k-1\,j} \} \, (v^{\prime }(m,k-1)).
$$
The inductive supposition states $e[m,k]=\alpha _k^m\cdot \{(e(m,k))+p_{n-1,n}(e^{\prime }(k,m))\} .$
Hence it remains to note that 
$$\alpha _{k-1}^{m}=\alpha _k^m\cdot (q-1)\cdot \prod _{k\leq j\leq m,\ j\neq n-1}p_{k-1\, j}.$$

c). Let $m=\phi (k)\neq n.$ In this case, $x_m=x_k,$ $\epsilon_k^m=q^{-1}.$ If $k=n-1,$ $m=n+1,$ then $e(n-1,n)=x_n$ and 
by Definition \ref{Dww} we have 
$$
e[n-1,n+1]=x_nx_{n+1}-q^{-1}p_{n\, n+1}x_{n+1}x_n=(1-q^{-1})x_nx_{n+1}
$$
since due to (\ref{Drelb}) we have  $x_{n+1}x_n=p_{n-1\, n}x_nx_{n+1}$ with $x_{n+1}=x_{n-1}$
and $p_{n\, n+1}p_{n-1\, n}$ $=1.$ In the shuffle form, we get
$$
(x_n)(x_{n+1})=(x_nx_{n+1})+p_{n+1\, n}^{-1}(x_{n+1}x_n)=p_{n\, n+1}\cdot \{ (x_{n+1}x_n)+p_{n-1\, n}(x_nx_{n+1})\}.
$$
It remains to note that $e(n-1, n+1)$ $=x_nx_{n+1},$ $e(n+1,n-1)$ $=x_{n+1}x_n,$ 
$e^{\prime }(n-1, n+1)$ $=x_{n-1}x_n,$ $e^{\prime }(n+1,n-1)$ $=x_nx_{n-1}$ $=x_nx_{n+1}.$

 Let $k<n-1.$ By definition (\ref{Dww}) we have
\begin{equation}
e[k,m]=e[k,m-1]\cdot x_m-q^{-1}p(e(k,m-1),x_m)x_k\cdot e[k,m-1].
\label{Dkk}
\end{equation}
Already done case a)  allows us to find the shuffle representation
$$
e[k,m-1]=\alpha _k^{m-1} \cdot \{ (e(m-1,k))+p_{n-1\, n}(e^{\prime }(m-1,k))\} .
$$
We have 
$$
[\![(e(m-1,k)),(x_m)]\!]=\sum _{uv=e(m-1,k)}p(v,x_{m})\cdot \{ p(x_{m},v)^{-1}p(v,x_{m})^{-1}-q^{-1} \}(ux_{m}v).
$$
Relations (\ref{Db1rel}), (\ref{Db1rell}) imply that 
$$
p(x_{m},v)p(v,x_{m})=\left\{ \begin{matrix} 1, \hfill & \hbox{ if } v=\emptyset  \hbox{ or }v=e(m-1,k);\hfill \cr
q^2, \hfill & \hbox{ if } v=x_k;\hfill \cr
q,  \hfill & \hbox{ otherwise.}\hfill
\end{matrix}\right.
$$
Therefore in the decomposition just three terms remain. Two of them, corresponding to 
$v=\emptyset $ and $v=x_k,$ are canceled:
$$
p(x_k,x_{m})(q^{-2}-q^{-1})+(1-q^{-1})=q(q^{-2}-q^{-1})+(1-q^{-1})=0.
$$
Thus
$$
[\![(e(m-1,k)),(x_{m})]\!] =\{ (1-q^{-1})\cdot \prod _{k\leq i<m,\ i\neq n-1}p_{i\, m} \} \, (e(m,k)).
$$
In perfect analogy, we have
$$
[\![(e^{\prime }(m-1,k)),(x_{m})]\!] =\{ (1-q^{-1})\cdot \prod _{k\leq i<m,\ i\neq n-1}p_{i\, m} \} \, (e^{\prime }(m,k)).
$$
It suffices to note that $1-q^{-1}=\epsilon _k^m (q-1),$ and by definition
$$\alpha _k^{m}=\alpha _k^{m-1}\cdot \epsilon _k^m (q-1)\cdot \prod _{k\leq i<m,\ i\neq n-1}p_{i\, m}.$$
 The proposition is completely proved.
\end{proof} 

\section{Coproduct formula for $U_q(\mathfrak{so}_{2n})$}

\begin{theorem} In $U_q^+(\mathfrak{so}_{2n})$
the coproduct on the elements $e[k,m],$ $k\leq m<2n$ 
has the following explicit form
\begin{equation}
\Delta (e[k,m])=e[k,m]\otimes 1+g_{km}\otimes e[k,m]
\label{Dco}
\end{equation}
$$
+\sum _{i=k}^{m-1}\tau _i(1-q^{-1})g_{ki}\, e[i+1,m]\otimes e[k,i],
$$
where $\tau _i=1,$ with two  exceptions, being $\tau _n=0$ if $k=n,$ and $\tau _{n-1}=0$ if $m=n;$
 and $\tau _{n-1}=p_{n\, n-1}$ otherwise. Here $g_{ki}={\rm gr}(e(k,i))$ is a group-like element 
that appears from the word $e(k,i)$ under the substitutions $x_{\lambda }\leftarrow g_{\lambda }.$
\label{Dcos}
\end{theorem}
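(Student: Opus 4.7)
The plan is to follow the same strategy used in Theorem \ref{cos} for $C_n$, adapted to accommodate the two-comonomial shuffle representation from Proposition \ref{Dshu}. The overall scheme reads off the shuffle image of $e[k,m]$ from Proposition \ref{Dshu} together with (\ref{Dng0})--(\ref{Dng3}), applies the shuffle coproduct (\ref{bcopro}), reassembles the resulting tensors as multiples of $e[i+1,m]\underline{\otimes}e[k,i]$ via the shuffle forms of the smaller PBW generators, converts to the ordinary coproduct through (\ref{copro}) (which introduces the group-like $g_{ki}$ and the commutation factor $p(e(i+1,m),e(k,i))$), and finally simplifies the scalar coefficient using Lemmas \ref{Dsig}, \ref{Dmu}.

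First I would dispose of the single-comonomial situations, namely $m\leq n$, $k\geq n$, and the borderline cases $m=n$ with $k<n-1$ or $k=n$ with $m>n+1$. In these, $e(m,k)$ has strictly fewer letters than the descending word $x_mx_{m-1}\cdots x_k$, because either $x_{n-1}$ or $x_{n+1}$ is absent, so (\ref{bcopro}) produces one fewer split than the sum in (\ref{Dco}) would seem to require. The missing index is precisely $i=n-1$ when $m=n$ and $i=n$ when $k=n$, which yields the exceptions $\tau_{n-1}=0$ and $\tau_n=0$; for all other $i$ the scalar manipulation is identical to the $C_n$ one and gives $\tau_i=1$.

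The main case is $k<n<m$, where $e[k,m]=\alpha_k^m\{(e(m,k))+p_{n-1,n}(e'(m,k))\}$, so (\ref{bcopro}) produces $2(m-k-1)$ shuffle splits in total. I would group them by the intended index $i\in\{k,\ldots,m-1\}$. For $i\in\{k,\ldots,n-2\}$ the split at position $j=m-i-1$ of $(e(m,k))$ gives $(e(m,i+1))\otimes(e(i,k))$ while the same position of $(e'(m,k))$ gives $(e'(m,i+1))\otimes(e(i,k))$; after the factor $p_{n-1,n}$ these combine into the shuffle form of $e[i+1,m]\underline{\otimes}e[k,i]$. The picture for $i\in\{n+1,\ldots,m-1\}$ is symmetric, with $e[k,i]$ rather than $e[i+1,m]$ carrying the two-term shape. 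For $i=n$ only $(e(m,k))$ contributes, producing $(e(m,n+1))\otimes(e(n,k))$; the analogous position in $(e'(m,k))$ straddles the transposed pair $x_nx_{n-1}$ and feeds the $i=n-1$ slot instead, providing the sole contribution $p_{n-1,n}(e(m,n))\otimes(e(n-1,k))$ there.

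Finally, converting to the ordinary coproduct, $\tau_i(1-q^{-1})$ equals the braided coefficient $\alpha_k^m/(\alpha_{i+1}^m\alpha_k^i)$ (with an additional $p_{n-1,n}$ when $i=n-1$), multiplied by the commutation factor $p(e(i+1,m),e(k,i))$. For $i\neq n-1$ this scalar reduces to $1-q^{-1}$ by the same algebraic manipulation as in Theorem \ref{cos}. For $i=n-1$ the $p$-product ratio hidden inside $\alpha_k^m/(\alpha_n^m\alpha_k^{n-1})$ differs from $p(e(k,n-1),e(n,m))$ by the factor $p_{n,n-1}^2$, because the product defining $\alpha_k^m$ in (\ref{Dshur1}) treats $x_{n-1}$ and $x_{n+1}$ asymmetrically via the exclusion $i,j\neq n-1$; combining this with the shuffle-level $p_{n-1,n}$ and the identity $p_{n-1,n}p_{n,n-1}=1$ gives $\tau_{n-1}=p_{n,n-1}$. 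The principal obstacle is the cross-over at the middle positions: the split $j=m-n$ of $(e'(m,k))$ does \emph{not} pair with the analogous split of $(e(m,k))$ but crosses into the $i=n-1$ slot, and verifying that at every other index the two shuffle words align exactly with the two summands of Proposition \ref{Dshu} is the central combinatorial content of the argument.
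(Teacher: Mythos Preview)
Your proposal is correct and follows essentially the same route as the paper's own proof: split off the single-comonomial cases (which the paper dispatches by appealing to the $A_{n-1}$ result for $U_{n-1}$ and $U_n$, while you argue directly from the shorter word length), then for $k<n<m$ apply $\Delta^b$ to the two-term shuffle image of Proposition~\ref{Dshu}, pair the splits of $(e(m,k))$ and $(e'(m,k))$ index by index, observe the cross-over at $i=n-1,n$, convert via (\ref{copro}), and simplify with Lemmas~\ref{Dsig} and~\ref{Dmu}. Your identification of the extra factor $p_{n,n-1}^{2}$ in $\alpha_k^m/(\alpha_n^m\alpha_k^{n-1})$ versus $p(e(k,n-1),e(n,m))$ is exactly what the paper establishes through its product-domain bookkeeping in (\ref{Dco9})--(\ref{Dco12}); that computation is the only place where genuine care is needed, and you have located it correctly.
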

\begin{proof}
If the word $e(k,m)$ does not contain the subword $x_{n}x_{n+1},$ then $e[k,m]$ belongs to either $U_{n-1}$ or $U_n.$ Both of these
Hopf algebras  are isomorphic to $U_q^+(\mathfrak{sl}_{n}).$ Hence if $m\leq n$ or $k\geq n,$  then we have nothing to prove.

Suppose that $k<n<m.$ In this case by Proposition \ref{Dshu} we have the shuffle representation
\begin{equation}
e[k,m]=\alpha _k^m\cdot \{ (e(m,k))+p_{n-1\, p}(e^{\prime }(m,k))\} ,
\label{Dco2}
\end{equation}
where $(e(m,k))$ is a comonomial shuffle 
$$
(e(m,k))=\left\{ \begin{matrix}(x_mx_{m-1}\cdots x_{n+2}x_{n+1}x_nx_{n-2}\cdots x_k), \hfill & \hbox{ if } k<n-1; \hfill \cr
(x_mx_{m-1}\cdots x_{n+2}x_{n+1}x_n), \hfill & \hbox{ if } k=n-1, \hfill 
\end{matrix}
\right.
$$
whereas $(e^{\prime }(m,k))$ is a related  one:
$$
(e^{\prime }(m,k))=\left\{ \begin{matrix}(x_mx_{m-1}\cdots x_{n+2}x_nx_{n-1}x_{n-2}\cdots x_k), \hfill & \hbox{ if } k<n-1; \hfill \cr
(x_mx_{m-1}\cdots x_{n+2}x_nx_{n-1}), \hfill & \hbox{ if } k=n-1. \hfill 
\end{matrix}
\right.
$$
Using (\ref{bcopro}) it is easy to find the braided coproduct of the comonomial shuffles:  
$$
\Delta ^b_0((e(m,k)))=\sum _{i=k}^{n-2}(e(m,i+1))\underline{\otimes }(e(i,k))
+\sum _{i=n}^{m-1}(e(m,i+1))\underline{\otimes }(e(i,k)),
$$
$$
\Delta ^b_0((e^{\prime }(m,k)))=\sum _{i=k}^{n-1}(e^{\prime }(m,i+1))\underline{\otimes }(e(i,k))
+\sum _{i=n+1}^{m-1}(e(m,i+1))\underline{\otimes }(e^{\prime }(i,k)),
$$
where for short we define $\Delta _0^b(U)=\Delta^b(U)-U\underline{\otimes }1-1\underline{\otimes }U.$
Taking into account (\ref{Dco2}), we have
$$
(\alpha _k^m)^{-1}\Delta ^b_0(e[k,m])=\left( \sum _{i=k}^{n-2} (\alpha _{i+1}^m)^{-1}e[i+1,m]\underline{\otimes }(e(i,k))\right) 
+p_{n-1\, n}(e(m,n))\underline{\otimes }(e(n-1,k))
$$
$$
+(e(m,n+1)) \underline{\otimes } (e(n,k))+\sum _{i=n+1}^{m-1}(e(m,i+1))\underline{\otimes } (\alpha _k^i)^{-1}e[k,i].
$$
Relation (\ref{Dng0}) applied to $e[k,i],$ $i\leq n$ and $e[i+1,m],$ $i\geq n$ allows one to rewrite 
the right hand side of the above equality in terms of $e[i,j]:$
$$
=\left( \sum _{i=k}^{n-2}(\alpha _k^i)^{-1}(\alpha _{i+1}^m)^{-1}e[i+1,m]\underline{\otimes }e[k,i]\right) 
+p_{n-1\, n}(\alpha _n^m)^{-1}(\alpha _k^{n-1})^{-1}e[n,m]\underline{\otimes }e[k,n-1]
$$
$$
+ (\alpha _{n+1}^m)^{-1} (\alpha _k^n)^{-1} e[n+1,m] \underline{\otimes } e[k,n]+
\sum _{i=n+1}^{m-1}(\alpha _{i+1}^m)^{-1}(\alpha _k^i)^{-1}e[i+1,m]\underline{\otimes }e[k,i].
$$
Thus, we have
\begin{equation}
\Delta ^b_0(e[k,m]) =\sum _{i=k}^{m-1}\gamma _i\, e[i+1,m]\underline{\otimes }e[k,i],
\label{Dco5}
\end{equation}
where 
\begin{equation}
\gamma _i=p_{n-1\,n}^{\delta _{n-1}^i} \cdot \alpha _k^m  (\alpha _k^i \alpha _{i+1}^m)^{-1},
\label{Dco6}
\end{equation}
whereas $\delta _{n-1}^i$ is the Kronecker delta.

Our next step is to see that for all $i,$ $k\leq i<m$ we have
\begin{equation}
\gamma _i=p_{n\, n-1}^{\delta _{n-1}^i} \cdot (q-1)\epsilon _k^m  (\epsilon _k^i \epsilon _{i+1}^m)^{-1}p(e(k,i),e(i+1,m)).
\label{gam}
\end{equation}
All factors except the $\epsilon $'s in (\ref{Dco6}) have the form $(q-1)^s\prod _{A}p_{ab},$ where $A$ is a suitable set of
pairs $(a,b)$ and $s$ is an integer exponent. Due to bimultiplicativity of the form $p(\hbox{-},\hbox{-}),$
the same is true for the right hand side of (\ref{gam}). Hence it suffices to demonstrate that the sum of exponents 
of the factors in (\ref{Dco6}) equals $1,$ and the resulting product domains  in (\ref{Dco6}) and (\ref{gam})
 are the same, or at least they define the same product.

If $i<n-1,$ then using (\ref{Dco6}),  (\ref{Dng0}), and Proposition \ref{Dshu}, we have
the required equality for the exponents,
$$
(m-k-1)-(i-k)-(m-i-2)=1,
$$
and for the product domains:
$$
\{k \leq a<b\leq m,\,  a,b\neq n-1\} \setminus (\{ k\leq a<b\leq i\} \cup \{ i+1\leq a<b\leq m,\,  a,b \neq n-1\} )
$$
$$
=\{k\leq a\leq i <b\leq m,\,  a,b \neq n-1\} .
$$

If $i\geq n,$ then similarly we have the required equality for the exponents,
$$
(m-k-1)-(i-k-1)-(m-i-1)=1,
$$
and for the product domains:
$$
\{k \leq a<b\leq m,\,  a,b\neq n-1\} \setminus (\{ k\leq a<b\leq i, \,  a,b \neq n-1\} \cup \{ i+1\leq a<b\leq m, \} )
$$
$$
=\{k\leq a\leq i <b\leq m,\,  a,b \neq n-1\} .
$$

In the remaining case, $i=n-1,$ we have $e(k,i)$ $=x_k\cdots x_{k-2}x_{k-1},$
$e(i+1,m)$ $=x_nx_{n+2}\cdots x_m.$ Due to (\ref{Dng3}),  the exponent is
$$
(m-k-1)-(n-1-k)-(m-n-1)=1,
$$
whereas the product domain of $\alpha _k^m  (\alpha _k^{n-1}\alpha _n^m)^{-1}$ reduces to 
$$
\{k \leq a<b\leq m,\,  a,b\neq n-1\} \setminus (\{ k\leq a<b\leq n-1\} \cup \{ n\leq a<b\leq m,\, a,b\neq n+1 \} )
$$
\begin{equation}
=\{k\leq a<n-1< b\leq m\}  \cup \{ n+1=a<b\leq m\} \cup \{ (n,n+1)\}.
\label{Dco9}
\end{equation}
However, in this case the product domain for $\alpha _k^{n-1}$
is not a subset of the product domain for $\alpha _k^{m}.$ Therefore additionally to the product defined by (\ref{Dco9}),
there appears a factor $\prod\limits_{k\leq a<b=n-1}p_{ab}^{-1}$ and a factor $p_{n-1\, n}$
 that comes from (\ref{Dco6}) due to $\delta _{n-1}^i=1.$
The latter factor cancels  with the factor defined by the subset $ \{ (n,n+1)\}$ since $p_{n-1\, n}p_{n\, n+1}=1,$
whereas the product domain of the former factor must be added to  the product domain of $p(e(k,i),e(i+1,m))$:
\begin{equation}
\{ k\leq a\leq n-1<b\leq m,\, b\neq n+1\} \cup  \{k\leq a<b=n-1\}.
\label{Dco11}
\end{equation}
It remains to compare the products defined by (\ref{Dco9}) without the last pair and that defined by  (\ref{Dco11}).

The set $\{ k\leq a<n-1<b\leq m,\, b\neq n+1 \}$ is a subset of the first sets in (\ref{Dco9}) and  (\ref{Dco11}).
After cancelling the pairs from that set, (\ref{Dco9}) and  (\ref{Dco11})  transform to, respectively,
\begin{equation}
\{k\leq a<n-1< b=n+1\}  \cup \{ n+1=a<b\leq m\}
\label{Dco91}
\end{equation}
and
\begin{equation}
\{ a=n-1<b\leq m,\, b\neq n+1\} \cup  \{k\leq a<b=n-1\}.
\label{Dco12}
\end{equation}
The first set of (\ref{Dco91}) and the second set of (\ref{Dco12}) define the same product 
because $x_{n+1}=x_{n-1}$ and $p_{a\, n+1}=p_{a\, n-1}.$ By the same reason
$p_{n-1\, b}=p_{n+1\, b},$ hence  the first set  of (\ref{Dco12}) defines the same product
as the second set of (\ref{Dco91}) up to one additional factor, $p_{n-1\, n}$ that corresponds to the pair  $(n-1,n).$
 This factor  is canceled by the first factor $p_{n,n-1}$ that appears in (\ref{gam}) due to
$\delta _{n-1}^i=1.$ The equality (\ref{gam}) is completely proved.

\smallskip

Now we are ready to consider the (unbraided) coproduct. Formula  (\ref{copro})
demonstrates that the tensors $u^{(1)}\otimes u^{(1)}$ of the coproduct and tensors
$u^{(1)}_b\underline{\otimes } u^{(1)}_b$ of the braided coproduct are related by 
$u^{(1)}_b=u^{(1)}{\rm gr}(u^{(2)})^{-1},\ $ $u^{(2)}_b=u^{(2)}.$ The equality  (\ref{Dco5})
provides the values of $u^{(1)}_b$ and $u^{(2)}_b.$ Hence we may find  
$u^{(1)}=\gamma _ie[i+1,m]g_{ki}$ and $u^{(2)}=e[k,i],$ where $g_{ki}={\rm gr}(e[k,i]).$ 
The commutation rules imply $$e[i+1,m]g_{ki}=p(e(i+1,m), e(k,i))g_{ki}e[i+1,m].$$
Therefore the coproduct has the form (\ref{Dcos}), where  $$\tau _i(1-q^{-1})=\gamma _i \, p(e(i+1,m), e(k,i)).$$
 Applying (\ref{gam}) and Lemma \ref{Dmu} we get
$$
\tau _i(1-q^{-1})=p_{n\, n-1}^{\delta _{n-1}^i} \cdot (q-1)\epsilon _k^m  (\epsilon _k^i \epsilon _{i+1}^m)^{-1}
\cdot \sigma _k^m  (\sigma_k^i \sigma_{i+1}^m)^{-1}.
$$
Lemma \ref{Dsig} and Eq.  ( \ref{Dshur2}) imply that $\epsilon _k^m\sigma _k^m$ equals $q$ for all $k,m$
without exceptions. Hence $$\epsilon _k^m  (\epsilon _k^i \epsilon _{i+1}^m)^{-1}
\cdot \sigma _k^m  (\sigma_k^i \sigma_{i+1}^m)^{-1}=q^{-1},$$
and we have
\begin{equation}
\tau_i=p_{n\, n-1}^{\delta _{n-1}^i}
=\left\{ \begin{matrix}
p_{n\, n-1},\hfill &\hbox{if } i=n-1;\hfill \cr
1,\hfill &\hbox{otherwise}.\hfill 
                             \end{matrix}
                    \right.
\label{Dtau}
\end{equation}
The theorem is completely proved.
\end{proof}

{\bf Remark 2.} If $q^t=1,$ $t>2,$ then 
(\ref{Dco}) remains valid due to precisely the same arguments
 that were given in Remark 1, see page \pageref{t}.

{\bf Remark 3.} 
In fact,  the exceptions
$\tau _n=0$ if $k=n,$ and $\tau _{n-1}=0$ if $m=n$ can be omitted  in the statement of the above theorem.  Indeed,
the related tensors are,  respectively, $e[n+1,m]\otimes e[n,n]$ and $e[n,n]\otimes e[k,n-1],$
whereas by definition $e[n,n]=[\![x_n,x_n]\!] =x_n\cdot x_n-q^{-1}p(x_n,x_n)x_n\cdot x_n=0.$
So that, we may assume $\tau_n =1,$ $\tau _{n-1}=p_{n\, n-1}$ as well.

\end{document}